\pgfplotsset{compat=newest}
\declaretheorem[name=Theorem,refname={Theorem},style=plain,numberwithin=section]{theorem}
\declaretheorem[name=Theorem,refname={Theorem},style=plain,numbered=no]{theorem*}
\declaretheorem[name=Proposition,refname={Proposition},style=plain,sibling=theorem]{proposition}
\declaretheorem[name=Proposition,refname={Proposition},style=plain,numbered=no]{proposition*}
\declaretheorem[name=Lemma,refname={Lemma},style=plain,sibling=theorem]{lemma}
\declaretheorem[name=Lemma,refname={Lemma},style=plain,numbered=no]{lemma*}
\declaretheorem[name=Definition,refname={Definition},style=definition,sibling=theorem]{definition}
\declaretheorem[name=Definition,refname={Definition},style=definition,numbered=no]{definition*}
\declaretheorem[name=Remark,refname={Remark},style=definition,sibling=theorem]{remark}
\declaretheorem[name=Remark,refname={Remark},style=remark,numbered=no]{remark*}
\declaretheorem[name=Example,refname={Example},style=definition,numbered=no]{example*}
\declaretheorem[name=Corollary,refname={Corollary},style=plain,numbered=no]{corollary*}
\declaretheorem[name=Preliminaries,refname={Preliminaries},style=definition,numbered=no]{Preliminaries*}
\declaretheorem[name=Acknowledgements,refname={Acknowledgements},style=definition,numbered=no]{Acknowledgements*}
\declaretheorem[name=Conjecture,refname={Conjecture},style=plain,sibling=theorem]{Conjecture}
\def\CC{{\mathbb{C}}}
\def\bP{{\mathbf{P}}}
\def\bZ{{\mathbf{Z}}}
\def\cC{{\mathcal{C}}}
\def\cD{{\mathcal{D}}}
\def\cM{{\mathcal{M}}}
\def\cN{{\mathcal{N}}}
\def\cO{{\mathcal{O}}}
\def\cP{{\mathcal{P}}}
\def\cQ{{\mathcal{Q}}}
\def\cU{{\mathcal{U}}}
\def\cV{{\mathcal{V}}}
\def\cW{{\mathcal{W}}}
\def\AA{{\mathbb{A}}}
\def\ZZ{{\mathbb{Z}}}
\def\QQ{{\mathbb{Q}}}
\def\CC{{\mathbb{C}}}
\def\PGL{\operatorname{PGL}}
\def\Bl{\operatorname{Bl}}
\def\Br{\operatorname{Br}}
\def\codim{\operatorname{codim}}
\def\GL{\operatorname{GL}}
\def\Gr{\operatorname{Gr}}
\def\Hom{\operatorname{Hom}}
\def\rank{\operatorname{rank}}
\def\Sing{\operatorname{Sing}}
\def\SL{\operatorname{SL}}
\def\Spec{\operatorname{Spec}}
\def\Bl{\operatorname{Bl}}
\def\Pf{\operatorname{Pf}}
\def\KKK{{\mathrm{K3}}}
\def\prim{{\mathrm{prim}}}
\def\trans{{\mathrm{trans}}}
\def\van{{\mathrm{van}}}
\let\ordexists\exists
\def\exists{\operatorname{\ordexists}}
\let\ordforall\forall
\def\forall{\operatorname{\ordforall}}
\def\setmid#1#2{{\left\{{#1}\;\middle|\;{#2}\right\}}}
\def\tilde{\widetilde}
\def\setminus{\smallsetminus}
\def\git{/\!\!/}
\def\bw#1{\mathchoice%
 {\textstyle{\bigwedge\mkern-4.5mu^{#1}\mkern1mu}}%
 {\textstyle{\bigwedge\mkern-4.5mu^{#1}\mkern1mu}}%
 {\scriptstyle{\bigwedge\mkern-5mu^{#1}}}%
 {\scriptscriptstyle{\bigwedge\mkern-5mu^{#1}}}%
}
\def\longarrow#1#2{\mathchoice{#2}{#1}{#1}{#1}}
\def\to{\longarrow{\rightarrow}{\longrightarrow}}
\let\shortmapsto\mapsto
\def\mapsto{\longarrow{\shortmapsto}{\longmapsto}}
\title{Rationality of Peskine varieties}
\author[V.~Benedetti]{Vladimiro Benedetti}
\author[D.~Faenzi]{Daniele Faenzi}
\address{Institut de Mathématiques de Bourgogne,
UMR CNRS 5584,
Université de Bourgogne et Franche-Comté,
9 Avenue Alain Savary,
BP 47870,
21078 Dijon Cedex,
France}
\email{vladimiro.benedetti@u-bourgogne.fr}
\email{daniele.faenzi@u-bourgogne.fr}
\begin{document}
\begin{abstract}
We study the rationality of the Peskine sixfolds in $\bP^9$. We prove the rationality of the Peskine sixfolds in the divisor $\cD^{3,3,10}$ inside the moduli space of Peskine sixfolds and we provide a cohomological condition which ensures the rationality of the Peskine sixfolds in the divisor $\cD^{1,6,10}$ (notation from \cite{BenSong}). We conjecture, as in the case of cubic fourfolds containing a plane, that the cohomological condition translates into a cohomological and geometric condition involving the Debarre-Voisin hyperkähler fourfold associated to the Peskine sixfold.
\end{abstract}

\maketitle

\section{Introduction}

The relationship between birational geometry and cohomological invariants of projective varieties has been widely investigated. In this respect, in view of their (partially conjectural) connection with rationality of fourfolds and hyperkähler geometry, a particular emphasis has been put on varieties whose derived category has a K3 component, namely a semiorthogonal factor having a Serre functor which is a shift by 2, see for instance \cite{kuz} or \cite{AT}. This is particularly relevant for certain families of Fano varieties, like  cubic fourfolds, Gushel-Mukai varieties etc, related to locally complete families of hyperkähler manifolds. Indeed, arithmetical considerations allow precise expectations about the connection between the rationality of the Fano manifolds and the existence of an actual K3 surface whose derived category is the K3 semiorthogonal factor mentioned above. Here we focus on one of these families, namely the Debarre-Voisin varieties, and their relationship with a Fano sixfold, called Peskine variety, in order to elucidate the behaviour of these varieties along some divisors of the moduli space. We look mostly at the rationality of the Peskine variety and try to relate it with the vanishing of certain Brauer classes, leaving the study of derived category for future work. \newline To formulate the results more precisely, let us first recall that Peskine and Debarre-Voisin varieties are associated with trivectors $\sigma$ in 10 variables. Given a general element $\sigma\in \wedge^3 \CC^{10}$ one can construct, among other varieties, the Peskine sixfold $X_1^\sigma\subset \bP^9$ and the Debarre-Voisin fourfold $X_6^\sigma\subset \Gr(6,10)$ ($\Gr$ denoting the Grassmannian). The former is a Fano variety, while the latter is a hyperkähler variety of $K3$-type. Surprisingly enough, these two varieties share some very interesting features: indeed, from a Hodge theoretic point of view, it was shown in \cite{BenSong} that their middle integral Hodge structure is isomorphic. The Peskine is not the only Fano variety closely related to the Debarre-Voisin hyperkähler variety (for instance, from $\sigma$ one can also construct a hyperplane section $X_3^\sigma$ in the Grassmannian $\Gr(3,10)$, with analogous results concerning its Hodge structure), but it is probably the easiest to study by virtue of its low dimension.

The Peskine-Debarre-Voisin relationship is very similar to the case of cubic fourfolds, which are Fano varieties whose variety of lines is a hyperkähler fourfold of type $K3$ (\cite{bd}), or to the case of Gushel-Mukai varieties (\cite{dk}). In the context of cubic fourfolds, a few conjectures (\cite{hassett}, \cite{Addington}, \cite{kuz}) have tried to relate the locus of cubic fourfolds which are rational to some special divisors in the moduli space of hyperkähler varieties of $K3$-type. This paper is an attempt to extend this rationality point of view to the case of Debarre-Voisin varieties and Peskine sixfolds (which play the role of cubic fourfolds). For this reason we focus on the rationality of Peskine varieties.
\medskip

Let $V_n$ be a $n$-dimensional complex vector space with $n$ even, let $\sigma\in \wedge^3 V_n^\vee$ be general and define a Peskine variety of dimension $n-4$ as \[X_{1,n-4}^\sigma\coloneqq \{[U_1]\in \bP(V_{n}) \mid \rank (\sigma(U_1,V_{n},V_{n}))\leq n-4\},\] where $\CC \cong U_1\subset V_n$. $X_{1,n-4}^\sigma$ can also be defined as the singular locus of the Pfaffian hypersurface in $\bP(V_n)$ associated to $\sigma$. Its canonical bundle is equal to $K_{X_{1,n-4}^\sigma}=\cO(-3)|_{X_{1,n-4}^\sigma}$. In low dimension the rationality of $X_{1,n-4}^\sigma$ is easily dealt with. Indeed, $X_{1,0}^\sigma$ is a point, $X_{1,2}^\sigma$ is the disjoint union of two copies of $\bP^2$ and $X_{1,4}^\sigma$ is isomorphic to $\bP^2\times \bP^2$. Thus the first non trivial Peskine variety (and the first one for which the rationality question is interesting) is the Peskine sixfold, which we will simply denote by $X_1^\sigma$.
\medskip

The moduli space of Peskine sixfolds (and of Debarre-Voisin varieties) is birational to the GIT quotient $$\cM\coloneqq \bP\big(\bw3V_{10}^\vee\big)\git \SL(V_{10}).$$ Three $\PGL_{10}$-stable divisors of this moduli space where constructed in \cite{dv} and then studied in detail in \cite{BenSong}.  In the notation of the latter mentioned paper, let us denote them by $\cD^{3,3,10}$, $\cD^{1,6,10}$, $\cD^{4,7,7}$ (the precise definitions will be recalled later on in the paper). 

We will be interested in the first two divisors, because for both of them it is possible to construct a possibly twisted $K3$ surface associated to $X_6^\sigma$. If $\sigma \in \cD^{3,3,10}$ then $X_6^\sigma$ is singular and birational to the Hilbert scheme of two points $S_{22}^{[2]}$ over a $K3$ surface $S_{22}$ of degree $22$ (\cite{dv}). If $\sigma \in \cD^{1,6,10}$ then $X_6^\sigma$ is isomorphic to a moduli space of twisted sheaves on a \emph{twisted $K3$ surface} $(S_{6},\beta)$ of degree $6$ (\cite{BenSong}). The two main results about rationality of the Peskine sixfold are the following:

\begin{theorem*}[Theorem \ref{thm_rationality_3310}]
If $\sigma\in \cD^{3,3,10}$ is a general element then $X_1^\sigma$ is rational.
\end{theorem*}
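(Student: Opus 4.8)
The plan is to exhibit $X_1^\sigma$ as a rational variety by showing that the linear projection away from a plane contained in it is birational onto $\bP^6$. Recall that, by the definition of $\cD^{3,3,10}$ (see \cite{dv,BenSong}), a general $\sigma$ in this divisor comes with a distinguished $3$-dimensional subspace $V_3 \subset V_{10}$ satisfying $\sigma(v_1,v_2,-) = 0$ for all $v_1,v_2 \in V_3$; choosing a complement $V_{10} = V_3 \oplus V_7$, this means $\sigma = \sigma_1 + \sigma_2$ with $\sigma_1 \in V_3^\vee \otimes \bw2 V_7^\vee$ — equivalently a net of alternating forms $\phi\colon V_3 \to \bw2 V_7^\vee$ on $V_7$ — and $\sigma_2 \in \bw3 V_7^\vee$. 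Writing $\iota_v\sigma$ for $\sigma(v,-,-)$, the starting observation is that for $0 \neq v \in V_3$ one has $\iota_v\sigma = \phi(v) \in \bw2 V_7^\vee$, an alternating form of rank at most $6$ on $V_{10}$; hence the plane $\bP(V_3) \cong \bP^2$ is contained in $X_1^\sigma$.

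The next step is to analyse the projection $\pi$ from $\bP(V_3)$, a rational map $\bP(V_{10}) \to \bP(V_7) = \bP^6$. Its fibre over $[w]$ is the $3$-plane $\bP(V_3 \oplus \langle w\rangle)$, and for a point $[v+w]$ of that plane (with $v \in V_3$) the form $\iota_{v+w}\sigma$ has, with respect to $V_{10} = V_3 \oplus V_7$, the block shape $\left(\begin{smallmatrix} 0 & B(w)\\ -B(w)^{\top} & \phi(v)+\iota_w\sigma_2\end{smallmatrix}\right)$, where $B(w)\colon V_7 \to V_3^\vee$ is the contraction $w' \mapsto \sigma_1(-,w,w')$ and does not depend on $v$. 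Away from the (at most two-dimensional) locus of those $w$ lying in some $\operatorname{rad}\phi(v)$, the map $B(w)$ has maximal rank $3$; a direct linear-algebra computation then gives $\rank(\iota_{v+w}\sigma) = 6 + \rank\bigl((\phi(v)+\iota_w\sigma_2)|_{K(w)}\bigr)$, where $K(w) \coloneqq \ker B(w)$ is four-dimensional and contains $w$. Thus $[v+w] \in X_1^\sigma$ precisely when $(\phi(v)+\iota_w\sigma_2)|_{K(w)}$ vanishes; since both summands already annihilate $w$, this is a system linear in $v$ taking values in $\bw2(K(w)/\langle w\rangle)^\vee \cong \CC^3$, say $\Psi_w(v) = \tau(w)$ with $\Psi_w \colon V_3 \to \bw2(K(w)/\langle w\rangle)^\vee$ a linear map between $3$-dimensional spaces. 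If $\Psi_w$ is invertible for general $w$, the general $3$-plane through $\bP(V_3)$ meets $X_1^\sigma$ in $\bP(V_3)$ together with the single reduced point $[\Psi_w^{-1}(\tau(w)) + w]$; since $\dim X_1^\sigma = 6 = \dim\bP^6$, this makes $\pi$ restrict to a birational map from $X_1^\sigma$ onto $\bP^6$, with the above rational inverse, so $X_1^\sigma$ is rational.

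The crux — and the step I expect to demand genuine work — is the invertibility of $\Psi_w$ for general $w$: a priori $\det\Psi_w$ could vanish identically on $\bP^6$, which would force the general fibre of $\pi|_{X_1^\sigma}$ to be positive-dimensional and defeat the argument. I would attack this either by a direct geometric study of the degeneracy locus $\{\det\Psi_w = 0\} \subset \bP^6$ — which has expected codimension at least $1$, the task being to exclude that it is all of $\bP^6$ — or, more robustly, by observing that $\cD^{3,3,10}$ is irreducible and that generic invertibility of $\Psi_w$ is an open condition on $\sigma$, so that it suffices to verify it for one explicit pair $(\phi,\sigma_2)$, a finite computation readily carried out with \Macaulay. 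The remaining ingredients used above — irreducibility and six-dimensionality of $X_1^\sigma$, and the fact that $\bP(V_3)$ lies in its closure — are standard.
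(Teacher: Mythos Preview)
Your approach is the paper's: project from $\bP(V_3)$ to $\bP(V_{10}/V_3)\cong\bP^6$ and show the general fibre away from $\bP(V_3)$ is a single reduced point. Your block-matrix computation and the paper's construction of an auxiliary $7$-space containing $V_4$ are the same calculation in different coordinates --- the paper's $V_7/V_4$ is exactly your $K(w)/\langle w\rangle$ --- and both arrive at the fibre as an affine-linear locus cut by three equations in~$\AA^3$.

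Where you diverge is precisely at the crux you flag. Rather than deferring to a \Macaulay check that $\Psi_w$ is generically invertible, the paper argues geometrically: a fibre of dimension $\geq 1$ over $[V_4/V_3]$ produces $V_2\subset V_4$ and a $7$-space on which $\sigma(V_2,-,-)$ vanishes, which in turn forces the existence of some $U_6\in X_6^\sigma$ with $V_3\subset U_6$; such $[U_6]$ lie on the degree-$22$ K3 surface $S=\{[U_6]\in X_6^\sigma: V_3\subset U_6\}$, so the locus of bad base-points sits inside the $4$-fold $\bP(\cU_3)\to S$. Were every fibre $\geq 1$-dimensional, the image of $X_1^\sigma$ would lie in this $4$-fold and the general fibre would then be $\geq 2$-dimensional, which forces $\sigma\in\cD^{4,7,7}$ --- a divisor distinct from $\cD^{3,3,10}$, contradicting genericity of~$\sigma$. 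Your shortcut is legitimate, but the paper's route is self-contained and, more to the point, ties the rationality statement back to the K3 geometry that motivates looking at this divisor in the first place.
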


\begin{theorem*}[Propositions \ref{prop_birational_1.1}, \ref{prop_birationality_1.2}, \ref{prop_description_1.2_quadrics}, Theorem \ref{thm_rat_X1}]
If $\sigma\in \cD^{1,6,10}$ is a general element then $X_1^\sigma$ is birational to a variety $X_{1.2}^\sigma$ which admits a dominant morphism to $\bP^3$ whose general fiber is the intersection of two quadrics in $\bP^5$. 

If there exists a $4$-dimensional subvariety $Z\subset X_{1.2}^\sigma$ whose intersection with a hyperplane section in the general fiber is odd and $Z\times_{\bP^3}\Spec(\CC(\bP^3)) $ is geometrically integral then $X_1^\sigma$ is rational.
\end{theorem*}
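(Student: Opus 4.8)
The plan is to descend the rationality question to the generic fibre of the fibration $X_{1.2}^\sigma\to\bP^3$ and then to feed the hypothesis on $Z$ into the arithmetic of intersections of two quadrics over the function field $K\coloneqq\CC(\bP^3)$.

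\emph{Step 1: reduction to the generic fibre.} By Propositions \ref{prop_birational_1.1}, \ref{prop_birationality_1.2} and \ref{prop_description_1.2_quadrics}, $X_1^\sigma$ is $\CC$-birational to $X_{1.2}^\sigma$, which carries a dominant morphism $f\colon X_{1.2}^\sigma\to\bP^3$ whose generic fibre $Y\coloneqq X_{1.2}^\sigma\times_{\bP^3}\Spec K$ is a smooth complete intersection of two quadrics in $\bP^5_K$. Since $K$ is purely transcendental over $\CC$ of transcendence degree $3$, if $Y$ is $K$-rational then $\CC(X_{1.2}^\sigma)=K(Y)\cong K(t_1,t_2,t_3)\cong\CC(\bP^6)$, so that $X_1^\sigma$ is rational. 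Hence it suffices to prove that $Y$ is $K$-rational.

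\emph{Step 2: the hypothesis on $Z$ as an odd closed point of $Y$.} The subvariety $Z$ dominates $\bP^3$ (its intersection with a general fibre is nonempty, being of odd degree), so $C\coloneqq Z\times_{\bP^3}\Spec K$ is a closed subscheme of $Y$ of dimension $\dim Z-3=1$; by the geometric integrality assumption $C$ is a geometrically integral curve over $K$. The hypothesis that the intersection of $Z$ with a hyperplane section of a general fibre is odd says precisely that $d\coloneqq\deg\bigl(\cO_Y(1)|_C\bigr)$ is an odd integer. Cutting $C$ with a general $K$-hyperplane of $\bP^5_K$ produces a $0$-cycle of degree $d$ on $Y$, supported on honest closed points of $Y$ thanks to geometric integrality; therefore $Y$ carries a closed point whose residue field has odd degree over $K$.

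\emph{Step 3: from an odd closed point to $K$-rationality of $Y$.} First recall the elementary fact that a smooth complete intersection of two quadrics $Y\subset\bP^5_K$ which contains a line $\ell$ defined over $K$ is $K$-rational: the planes of $\bP^5$ containing $\ell$ form a $\bP^3_K$, a general such plane meets $Y$ in $\ell$ together with a single residual point, and plane $\mapsto$ residual point is birationally inverse to $q\mapsto\langle \ell,q\rangle$. It remains to produce a $K$-line on $Y$ out of the odd closed point of Step 2. This is the arithmetic core of the argument: the Fano scheme $F_1(Y)$ of lines on $Y$ is a torsor under the Jacobian of the genus-$2$ curve attached to the pencil of quadrics through $Y$, and its class is controlled by a $2$-torsion Brauer class $\beta$ extracted from the even Clifford algebra of that pencil — the analogue, for $\cD^{1,6,10}$, of the even Clifford/Brauer mechanism for cubic fourfolds containing a plane. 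A Springer-type corestriction argument then shows that $\beta$ becomes trivial over the odd-degree residue field of Step 2, hence $\beta=0$ over $K$, so $F_1(Y)(K)\neq\emptyset$; feeding such a $K$-line into the projection above yields the $K$-rationality of $Y$, and Step 1 concludes.

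\emph{Main obstacle.} The delicate point is Step 3: one must pin down exactly which cohomological class obstructs $K$-rationality of a $(2,2)$-complete intersection threefold, verify that it is $2$-torsion, and check that an \emph{odd}-degree multisection — rather than a genuine section — suffices to kill it. This is precisely where the hypotheses on the degree and the geometric integrality of $Z$ are consumed: geometric integrality guarantees that the $0$-cycle of Step 2 is genuinely supported on $Y$ and that the restriction of $\beta$ to $\CC(Z)$ is the relevant trivializing extension, so that the Springer-type degree bookkeeping is valid.
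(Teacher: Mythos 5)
Your Step~1 and the use of the three birationality/fibration propositions match the paper, and your Step~2 correctly extracts from $Z$ a geometrically integral curve $C=Z\times_{\bP^3}\Spec(K)$ of odd degree on the generic fibre. But Step~3 has two genuine gaps. First, you assert that the generic fibre $Y=X_{1.2/k}^\sigma$ is a \emph{smooth} complete intersection of two quadrics. The paper proves only that $X_{1.2}^\sigma$ is singular in codimension three (Proposition \ref{prop_description_1.2_quadrics}), so the singular locus may dominate $\bP^3$ and the generic fibre is merely guaranteed to have a zero-dimensional singular locus (Proposition \ref{prop_X_k}). Your entire Step~3 machinery (the Fano scheme of lines as a torsor under the Jacobian of the genus-$2$ discriminant curve) requires $Y$ smooth, i.e. the pencil to have simple degeneration, and collapses otherwise. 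The paper handles this by observing that the Hassett--Tschinkel argument producing a line only needs the \emph{general} quadric of the pencil to be smooth, and then invoking Colliot-Th\'el\`ene--Sansuc: the line-projection gives rationality unless the line sits inside $\Sing(Y)$, which is impossible since that locus is finite (Lemma \ref{lem_rat_k}).

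Second, and more seriously, in Step~2 you throw away the curve and retain only a closed point of $Y$ of odd residue degree, and in Step~3 you try to kill the obstruction by a Springer/corestriction argument from that point. This does not work: for a threefold intersection of two quadrics, a zero-cycle of odd degree --- even a $K$-rational point --- does not imply the existence of a $K$-line (over $\bR$ there are smooth such threefolds with real points, no real line, and which are irrational over $\bR$). The obstruction to $F_1(Y)(K)\neq\emptyset$ is a torsor class under $\operatorname{Jac}(C)$, not a $2$-torsion Brauer class, and an odd-degree point of $Y$ does not produce a point of $F_1(Y)$ over an odd-degree extension, so there is nothing for your corestriction to act on. The hypothesis that must actually be consumed is the geometrically integral odd-degree \emph{curve} itself; this is precisely the input to \cite[Theorem A.5(i)]{hassett_tschinkel_rationality_intersection_quadrics}, which the paper cites verbatim in Lemma \ref{lem_rat_k} instead of re-deriving. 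To repair your argument you would need to reinstate the curve in Step~3 and either quote that theorem or reproduce its proof, and separately deal with the possibly singular fibre as above.
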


Moreover, if $\sigma\in \cD^{1,6,10}$ Proposition \ref{prop_trivial_twisting} gives a geometric condition on $X_6^\sigma$ in order to have that the twisting $\beta$ is trivial. Speculating as in the case of cubics and the corresponding rationality conjectures, in Section \ref{sec_open_questions} we conjecture that the triviality of the $\beta$-twisting and the rationality of $X_1^\sigma$ for $\sigma \in \cD^{1,6,10}$ are closely related.
\medskip

After reviewing what is known for cubic fourfolds, we study the rationality of the Peskine sixfold when $\sigma \in \cD^{3,3,10}$ (and we prove Theorem \ref{thm_rationality_3310}) in Section \ref{sec:D22} and when $\sigma\in \cD^{1,6,10}$ in Section \ref{sec:D24}. In particular, in the latter we recall the results in \cite{BenSong}, we provide some birational models of $X_1^\sigma$ and then we prove Theorem \ref{thm_rat_X1}. In Section \ref{sec_open_questions} we formulate some conjectures and open questions that arise from this work.

\begin{Acknowledgements*}
The authors wish to thank Jieao Song for useful discussions and for suggesting the general idea in the proof of Theorem \ref{thm_rationality_3310}. This project was partially supported by the EIPHI Graduate School (contract ANR-17-EURE-0002), by ANR-20-CE40-0023, by SupToPhAG/EIPHI ANR-17-EURE-0002, Région Bourgogne-
Franche-Comté, Feder Bourgogne and by BRIDGES ANR-21-CE40-0017.
\end{Acknowledgements*}


\begin{Preliminaries*}
We will denote by $V_i,U_i,W_i$ vector spaces of dimension $i$. If $E$ is a vector bundle on a variety $X$, $\bP(E)$ will denote the projective bundle of lines in $E$. On the Grassmannian $\Gr(k,V)$ we will denote by $\cU$ the tautological bundle of rank $k$, and by $\cQ$ the corresponding quotient bundle. On $\bP(V)$ the tautological bundle will also be denoted by $\cO(-1)$. 
\end{Preliminaries*}

\subsection{Review: Rationality of cubic fourfolds}

Rationality of cubic fourfolds is one open question in algebraic geometry that has raised a lot of interest in the last years. A conjecture by Hassett states that a cubic fourfold should be rational if and only if there exists an associated K3 surface to the cubic (see \cite{hassett}). An equivalent formulation (see \cite{Addington}) is that a cubic is rational if and only if its Fano variety of lines is birational to some moduli space of sheaves on a K3 surface. In particular, the very general cubic fourfold should be irrational, and rational cubics should be contained in the union of countably many divisors in the moduli space of cubics. Beware that if the variety of lines is birational to a moduli space of \emph{twisted} sheaves on a K3 surface (with non-trivial twisting), the cubic fourfold should not be rational. Finally, let us mention that this conjecture can be stated also in terms of the derived category of the cubic fourfold (see \cite{kuz} and \cite{AT}). 

The rationality side of the conjecture has been proven in a few significant examples, for instance for pfaffian cubics, for singular cubics, for cubics containing a plane and another cycle of odd intersection with the plane (see \cite{hassett} for a survey). Let us briefly recall here the case of singular cubics and cubics containing a plane, since they will be inspirational for the results in the present paper.

\subsubsection{Singular cubics}
This case is the easiest one in terms of rationality. On the one side, a general singular cubic has a single nodal point; then projection from the nodal point gives a birational map from the cubic to $\bP^4$. On the other side, to any singular cubic with a single nodal point one can associate a K3 surface of degree six such that the Fano variety of lines is birational to the Hilbert scheme of points on the K3 surface.

\subsubsection{Cubic fourfolds containing a plane}
This case was originally considered by
Voisin in her thesis~\cite{voisinthesis} and then by Hassett in \cite{hassett}. Let $X$ be a cubic fourfold in $\bP(V_6)$ that contains a plane
$\bP(V_3)$ for some $V_3\subset V_6$. This corresponds to the fact that the
Hodge structure of $X$ is in the Heegner divisor $\cD_8$ in the period domain.
As shown in~\cite{voisinthesis}, the blow up $\Bl_{\bP(V_3)}X$ projects onto
the plane $\bP^2=\bP(V_6/V_3)$ and the fibers are quadric surfaces which are
generically smooth. The discriminant locus, that is the locus where the
quadrics are singular, is a sextic curve in~$\bP^2$. The $2$-to-$1$ cover of $\bP^2$ ramified along the discriminant curve is a K3 surface $S$ of degree 2.

The variety $F\subset \Gr(2,V_6)$ of lines contained in $X$ is a hyperkähler
fourfold of $\KKK^{[2]}$-type by~\cite{bd}.
The lines in the fibers of $\Bl_{\bP(V_3)}X\to \bP^2$ form a uniruled divisor
$D$ in $F$. The divisor $D$ admits a $\bP^1$-fibration
over~$S$. In~\cite{voisinthesis} it was shown that the transcendental part
$H^2(F,\bZ)_\trans$ (of discriminant 8 and Hodge type $(1,19,1)$) embeds as a
sublattice of index two into the primitive cohomology $H^2(S,\bZ)_\prim$ (of
discriminant 2). This sublattice is closely related to the Brauer class $\beta$
induced by the $\bP^1$-fibration $D\to S$, so it should be considered as the
``primitive cohomology'' of the twisted K3 surface $(S,\beta)$
(see~\cite{vangeemen}). 

For a very general $X$ containing a plane the twisting class $\beta$ is non-trivial. Let $H$ be the hyperplane class in the cubic and $P$ the class of the plane. It was shown in \cite{hassett} that if there exists a $2$-dimensional cycle $T$ in $X$ whose intersection $TH^2-TP$ is odd then $\beta$ is trivial and $X$ is rational. Inside the divisor of cubics containing a plane, this condition occurs on a countable union of divisors on which the Fano variety of the cubic becomes birational to some space of (non-twisted!) moduli sheaves on a K3 surface; thus this case agrees with the conjecture on rationality of cubics mentioned before.

\subsection{Review of Debarre-Voisin varieties}

In \cite{dv} a maximal dimensional family of polarized hyperkähler fourfolds parametrized by $$\cM\coloneqq \bP\big(\bw3V_{10}^\vee\big)\git \SL(V_{10})$$ is constructed. Let $\sigma\in \wedge^3 V_{10}^\vee$ be a general trivector in ten variables. Then the zero locus
\[
X_6^\sigma\coloneqq \{[U_6]\in \Gr(6,V_{10}) \mid \sigma|_{U_6}=0\} \subset \Gr(6,V_{10})
\]
is a hyperkähler fourfold of type $K3^{[2]}$ with a natural polarization $\cO(1)|_{X_6^\sigma}$ of degree $22$ (with respect to the Beauville-Bogomolov form). From $\sigma$ one can also construct a hyperplane section $X_3^\sigma \subset \Gr(3,V_{10})$ and in \cite{dv} it was shown that there exists an isomorphism of Hodge structures $H^{20}(X_3^\sigma,\QQ)_{\van}\cong H^2(X_6^\sigma,\QQ)_{\prim}$, where $\van$ and $\prim$ denote the vanishing and primitive cohomology respectively. This isomorphism was extended in \cite{BenSong} to integer coefficients and to the Peskine sixfold
\[
X_1^\sigma\coloneqq \{[U_1]\in \bP(V_{10}) \mid \rank (\sigma(U_1,V_{10},V_{10}))\leq 6\},
\]
where $\sigma(U_1,V_{10},V_{10}) \in U_1^\vee \otimes \wedge^2 (V_{10}/U_1)^\vee$ should be thought of as a $9\times 9$ skew-symmetric matrix. More precisely, it was proven that $H^6(X_1^\sigma,\ZZ)_\van$, endowed with the intersection form on $X_1^\sigma$, and $H^2(X_6^\sigma,\ZZ)_\prim$, endowed with the Beauville-Bogomolov form, are isomorphic lattices modulo a Tate twist.

The Peskine sixfold is a codimension-3 pfaffian (which, in particular, is an orbital degeneracy locus, see \cite{benedetti}) associated to $9\times 9$ skew-symmetric matrices of rank $\leq 6$ and to the section $\sigma\in \wedge^3 V_{10}^\vee \cong H^0(\bP(V_{10},\wedge^2 \cQ^\vee (1))$. The same trivector $\sigma \in \wedge^3 V_{10}^\vee$ can be seen as a section of some vector bundle on other Grassmannians; doing so, it is possible to construct other varieties related to $X_1^\sigma$ as orbital degeneracy loci associated to $\sigma$. For instance, in \cite{BenSong} the Fano variety of lines contained in $X_1^\sigma$ has been constructed for general $\sigma$ as a degeneracy locus 
$X^\sigma_7 \subset  \Gr(7,V_{10})$. In the following sections we will study the rationality of $X_1^\sigma$ when $\sigma$ belongs to some divisors inside $\wedge^3 V_{10}^\vee$. These divisors are already well-known since they already appear in \cite{dv} and \cite{BenSong} (we refer to these two papers for background). 

\section{Rationality of the Peskine sixfold for $\sigma$ in $\cD^{3,3,10}$}
\label{sec:D22}

The divisor $\cD^{3,3,10}\subset \wedge^3 V_{10}^\vee$ is defined as the locus
\[
\cD^{3,3,10}\coloneqq \{\sigma\in \wedge^3 V_{10}^\vee \mid \exists V_3\subset V_{10}\mbox{ s.t. }\sigma(V_3,V_3,V_{10})=0 \}. 
\]
For $\sigma$ general in this divisor $X_3^\sigma$ is singular at $[V_3]$ and $X_6^\sigma$ is singular along a K3 surface $S$ of degree $22$. The converse holds as well (by \cite[Proposition 2.4]{BenSong}): if $X_3^\sigma$ or $X_6^\sigma$ are singular then $\sigma\in \cD^{3,3,10}$. Moreover $X_6^\sigma$ is birational to $S^{[2]}$. When it comes to rationality of $X_1^\sigma$, we can prove the following result.

\begin{theorem}
\label{thm_rationality_3310}
If $\sigma\in \cD^{3,3,10}$ is a general element then $X_1^\sigma$ is rational.
\end{theorem}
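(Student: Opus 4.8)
The plan is to exploit the special structure of $\sigma\in\cD^{3,3,10}$, namely the existence of a subspace $V_3\subset V_{10}$ with $\sigma(V_3,V_3,V_{10})=0$, to produce an explicit birational map from $X_1^\sigma$ to a projective space or to an obviously rational variety. The acknowledgement to Jieao Song for ``the general idea in the proof'' suggests the argument is clean once the right projection is found. First I would analyze the incidence between the distinguished $\bP(V_3)\subset\bP(V_{10})$ and the Peskine sixfold $X_1^\sigma$: the defining condition is that the $9\times9$ skew-symmetric matrix $\sigma(U_1,V_{10},V_{10})$ on $V_{10}/U_1$ drops rank to $\le 6$, i.e.\ has a kernel of dimension $\ge 3$. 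The vanishing $\sigma(V_3,V_3,V_{10})=0$ forces, for any $[U_1]\in\bP(V_3)$, that $V_3/U_1$ lies in the kernel of $\sigma(U_1,-,-)$, so $\bP(V_3)\subset X_1^\sigma$ entirely (and it is a locus of ``extra degeneracy''). This is the analogue of the plane contained in a cubic fourfold, and it is the natural center from which to project.

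The key step is to set up the projection $\pi\colon \bP(V_{10})\dashrightarrow \bP(V_{10}/V_3)=\bP^6$ from $\bP(V_3)$ and study the induced rational map $X_1^\sigma\dashrightarrow\bP^6$. I would blow up $\bP(V_3)$ inside $X_1^\sigma$ (or inside $\bP(V_{10})$ and take strict transforms) and compute the generic fiber over $\bP^6$. For a general point $[\bar U_1]=[U_4/V_3]\in\bP^6$ the fiber is cut out inside $\bP(U_4/V_3)\cong\bP^3$ (projectively $\bP^2$ after removing the exceptional stuff, or a $\bP^3$ depending on the blow-up convention) by the vanishing of the appropriate minors of a matrix whose entries are linear in the fiber coordinates — one expects a low-degree complete intersection, ideally something rational like a conic bundle, a quadric, or a rational surface/threefold. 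The dimension count $6 = 6 + 0$ versus the base $\bP^6$ suggests the generic fiber is $0$-dimensional, so the map is generically finite of some degree onto $\bP^6$; I would then argue this degree is $1$ (the fiber is a single reduced point for general target point) by a direct computation of the relevant linear-algebra condition — the rank-$\le6$ locus of a $9\times9$ skew matrix depending linearly on one parameter, with the $V_3$-structure built in, singles out a unique solution. Alternatively, if the fiber is positive-dimensional, identify it as a rational variety and invoke that a variety fibered in rational varieties over a rational base with a section (which the blow-up exceptional divisor should provide) is rational; the degeneracy-locus description via \cite{benedetti} gives enough control on the bundles involved to carry this out.

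The main obstacle I anticipate is the explicit identification of the generic fiber of the projection and, above all, verifying that the map has degree $1$ (or exhibiting a rational section if it is a genuine fibration). This is a concrete but delicate multilinear-algebra computation: one must pick coordinates adapted to the flag $U_1\subset V_3\subset U_4\subset V_{10}$, write $\sigma$ in block form respecting $\sigma(V_3,V_3,V_{10})=0$, and read off which Pfaffian conditions survive in a general fiber. A secondary subtlety is checking genericity — that a general $\sigma\in\cD^{3,3,10}$ has no further unexpected degeneracies (e.g.\ $X_3^\sigma$ singular only at $[V_3]$, no second $V_3'$) so that the projection is well-defined and birational; this can be handled by a dimension count on $\cD^{3,3,10}$ together with \cite[Proposition 2.4]{BenSong}, or by exhibiting one explicit example with the help of \Macaulay. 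Once the fiber is pinned down, the rationality conclusion is immediate.
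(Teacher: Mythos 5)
You have correctly identified the paper's strategy: project $X_1^\sigma$ from $\bP(V_3)$ to $\bP(V_{10}/V_3)\cong\bP^6$ and show the resulting map is birational. However, the decisive step --- that the general fiber is a single reduced point --- is exactly where your proposal stops, and the plan you sketch for it does not yet constitute an argument. The paper's proof is not a brute-force coordinate computation of ``the rank-$\le 6$ locus depending linearly on one parameter''; rather, it first shows that for $l\in\bP(V_4)\setminus\bP(V_3)$ the hypothesis $\sigma(V_3,V_3,V_{10})=0$ and the genericity of $V_4/V_3$ produce a \emph{unique} $V_7\supset V_4$ with $\sigma(l,V_4,V_7)=0$, and that (using that a rank-$6$ two-form with isotropic $V_4$ admits an isotropic $V_7\supset V_4$ containing its kernel) membership of $l$ in $X_1^\sigma$ is equivalent to the vanishing of $\sigma(l,\cdot,\cdot)|_{V_7/V_4}$, a section of $\cO(1)^{\oplus 3}$ over $\AA^3\cong\bP(V_4)\setminus\bP(V_3)$. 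Hence each fiber is a \emph{linear} subspace of $\AA^3$, so birationality reduces to showing the general fiber is nonempty (clear, since $X_1^\sigma\not\subset\bP(V_3)$) and $0$-dimensional. That last point is the real content and is missing from your proposal: the paper shows a positive-dimensional fiber forces the existence of $[U_6]\in X_6^\sigma$ with $V_3\subset U_6$, i.e.\ a point of the degree-$22$ K3 surface $S$, so the locus $Z\subset\bP^6$ of such fibers sits inside the $4$-fold $\bP(\cU_3)$ over $S$; and if the image of $X_1^\sigma$ were contained in $Z$ the general fiber would be $2$-dimensional, forcing $\sigma\in\cD^{4,7,7}\cap\cD^{3,3,10}$, which is excluded for general $\sigma\in\cD^{3,3,10}$ by \cite[Proposition 3.3]{BenSong}.

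Two further cautions. Your fallback --- ``if the fiber is positive-dimensional, identify it as a rational variety and invoke that a fibration in rational varieties over a rational base with a section is rational'' --- cannot be used: a dominant map from the sixfold $X_1^\sigma$ to $\bP^6$ cannot have positive-dimensional general fiber, and in any case rationality of the geometric generic fiber (even with a section) does not imply rationality over the function field of the base, so this principle is false as stated. Also, the fiber of the projection over $[U_4/V_3]$ lives in $\bP(U_4)$, not $\bP(U_4/V_3)$ (which is a point). So while the projection you chose is the right one, the proposal has a genuine gap precisely at the step that makes the theorem true.
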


\begin{proof}
$\sigma\in \cD^{3,3,10}$ means that there exists $V_3\subset V_{10}$ such that $\sigma(V_3,V_3,V_{10})=0$. Let us project $X_1^\sigma$ to $\bP(V_{10}/V_3)\cong \bP^6$. Let $[V_4/V_3]\in \bP(V_{10}/V_3) $ be a general point and consider its fiber $\bP(V_4)\cap X_1^\sigma$. We will show that this fiber is a single point, away from $\bP(V_3)$ (where the projection is not defined). Let $l\in \bP(V_4)\setminus \bP(V_3)$. Then since $V_4/V_3$ is general, $\sigma(l,\cdot,\cdot)$ sends $V_3$ injectively to $V_{10}^\vee$; as a consequence, since $\sigma(l,V_4,V_4)=0$, there exists a unique $V_7\subset V_{10}$ such that $V_4\subset V_7$ and $\sigma(l,V_4,V_7)=0$. 

Notice that for any rank $6$ $2$-form $\omega$ in ten variables, and for any four dimensional $\omega$-isotropic subspace, there exists a seven dimensional $\omega$-isotropic subspace containing it and the kernel of $\omega$. Thus if $\sigma(l,\cdot,\cdot)$ has rank six, since $V_4$ is isotropic, there exists an isotropic seven dimensional subspace containing $V_4$. This subspace must be $V_7$ because $V_7$ is uniquely defined by the condition that $\sigma(l,V_4,V_7)=0$. So the fiber $(\bP(V_4)\cap X_1^\sigma)\setminus \bP(V_3)$ is the zero locus of $\sigma(l,\cdot,\cdot)|_{V_7/V_4}$, which is a section of the vector bundle $\wedge^2 (V_7/V_4)^\vee (1)\cong \cO(1)^3$ over $\bP(V_4)\setminus \bP(V_3)\cong \AA^3$. Such a zero locus is a linear subspace in $\bP^3$ and thus also in $\AA^3$. Since $X_1^\sigma$, being at least six dimensional, is not contained inside $\bP(V_3)$, it is non-empty for a general point $[V_4/V_3]$. We want to show that it is zero-dimensional for a general point $[V_4/V_3]\in \bP(V_{10}/V_3)$. 

Let us therefore consider a point $[V_4/V_3]\in \bP(V_{10}/V_3)$ such that the fiber is at least one dimensional. This means that there exists $V_2\subset V_4\subset V_7$ such that $\sigma(V_2,V_7,V_7)=0$. Since $\sigma(V_4,V_4,V_7)=0$ we deduce that there exists $U_6$ such that $V_4\subset U_6\subset V_7$ and $\sigma|_{U_6}=0$: indeed, such subspaces $U_6$ are defined by the condition $\sigma(V_4/V_2,U_6/V_4,U_6/V_4)=0$, which corresponds to a zero locus of $(V_4/V_2)^\vee\otimes \cO(1)\cong \cO(1)^{\oplus 2}$ inside $\Gr(2,V_7/V_4)\cong \bP^2$, and which is therefore never empty. Now, since $U_6$ contains $V_3$ and $[U_6]\in X_6^\sigma$, from \cite{dv} we know that $[U_6]\in S$, where $S$ is a K3 surface of degree $22$ defined by $$S:=\{ [U_6]\in X_6^\sigma \mid V_3\subset U_6 \} .$$
Let $\cU_3$ be the rank three vector bundle over $S$ whose fiber over $[U_6]\in S$ is $U_6/V_3$. Since $V_4/V_3 \subset U_6/V_3$ we deduce that $[V_4/V_3]\in \bP(\cU_3)$, with $\dim(\bP(\cU_3))=4$. 

Let $Z\subset \bP(V_{10}/V_3)$ be the locus of points over which the fiber of $X_1^\sigma \to \bP(V_{10}/V_3)\cong \bP^6$ is at least one dimensional. From what we have said $Z\subset \bP(\cU_3)$ and thus $\dim(Z)\leq 4$. But then, if the image of $X_1^\sigma \to \bP(V_{10}/V_3)\cong \bP^6$ is contained in $Z$ we deduce that the general fiber is at least two dimensional. However this is not possible since the existence of a two dimensional fiber translates into the existence of a subspace $V_3\subset V_7$ such that $\sigma(V_3,V_7,V_7)=0$. If such a subspace exists one can easily show that there exists $U_4$ such that $U_4\subset V_7$ and $\sigma(U_4,V_7,V_7)=0$. This means that $\sigma\in \cD^{4,7,7}\cap \cD^{3,3,10}$, where $$ \cD^{4,7,7}:=\{\sigma\in \wedge^3 V_{10}^\vee \mid \exists U_4\subset V_7\subset V_{10} \mbox{ s.t. } \sigma(U_4,V_7,V_7)=0\}.$$
The space $\cD^{4,7,7}$ is a divisor in $\wedge^3 V_{10}^\vee$ which is not contained in the divisor $\cD^{3,3,10}$ (see \cite[Proposition 3.3]{BenSong}), so for a general $\sigma\in\cD^{3,3,10}$ we have $\sigma\notin \cD^{4,7,7}$. Thus the general fiber of the rational map $X_1^\sigma \to \bP(V_{10}/V_3)\cong \bP^6$ must be zero dimensional and linear, hence a single point. We deduce that $X_1^\sigma \to \bP(V_{10}/V_3)\cong \bP^6$ is dominant and generically $1:1$, i.e. birational.
\end{proof}

\section{Rationality of the Peskine sixfold for $\sigma$ in $\cD^{1,6,10}$}
\label{sec:D24}

In the moduli space~$\wedge^3 V_{10}^\vee$ of trivectors, we have also the
divisor~$\cD^{1,6,10}$ given by 
$$
\cD^{1,6,10}\coloneqq \{\sigma\in \wedge^3 V_{10}^\vee \mid \exists V_1\subset V_6\subset V_{10}\mbox{ s.t. }\sigma(V_1,V_6,V_{10})=0 \}. 
$$
$\cD^{1,6,10}$ is also the locus
where the Peskine variety~$X_1^\sigma$ becomes singular and generically admits
an isolated singularity at~$[V_1]$. In this section we will study the
geometry along this divisor.  Notably we will recall the construction
of a K3 surface~$S$ of degree~$6$ and a divisor~$D$ in~$X^\sigma_6$ ruled
over~$S$, which was given in \cite{BenSong}. 

\subsection{Review of \cite{BenSong}}
\label{secconstrK3surf}

From now on, we let $\sigma\in\bw3 V_{10}^\vee$ be a general trivector in the
divisor $\cD^{1,6,10}$, so there is a unique distinguished flag $[V_1\subset V_6]$
such that $\sigma(V_1,V_6,V_{10})=0$. We will show how to construct a K3 surface $S$ of degree $6$
and a uniruled divisor $D$ in $X^\sigma_6$ that admits a $\bP^1$-fibration
over~$S$. The $\bP^1$-fibration
defines a non-trivial Brauer class $\beta\in\Br(S)$, and we know that
$X^\sigma_6$ can be recovered as a moduli space of $\beta$-twisted sheaves on
$S$.

\begin{proposition}\cite[Proposition 4.5]{BenSong}
\label{propK3S}
Suppose that the trivector $\sigma\in\bw3 V_{10}^\vee$ is general in the divisor
$\cD^{1,6,10}$, that is, we have $\sigma(V_1,V_6,V_{10})=0$ for a unique flag $V_1\subset
V_6\subset V_{10}$. Then $\sigma$ defines a smooth K3 surface $S$ of degree $6$ inside
$\Gr(2,V_{10}/V_6)$ as
\begin{equation}
\label{eqdefS1}
S=\setmid{[U_8]\in \Gr(2,V_{10}/V_6)\mbox{ s.t. }\exists U_4\subset U_8}{\begin{array}{c}V_1\subset U_4,\ \sigma(V_1,U_8,U_8)=0\\
\text{and }\sigma(U_4,U_4,U_8)=0\end{array}}
\end{equation}
\[
\subset \Gr(2,V_{10}/V_6).
\]
\end{proposition}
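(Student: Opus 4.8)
The plan is to impose the two conditions of \eqref{eqdefS1} on $\Gr(2,V_{10}/V_6)\cong\Gr(2,4)$ in turn, reducing first to a smooth quadric threefold and then, inside it, to a $(2,3)$ complete intersection, and finally to recognise such a surface as a degree‑$6$ K3. For the \emph{linear condition}, fix $0\neq v_1\in V_1$ and put $\omega\coloneqq\sigma(v_1,-,-)\in\bw2 V_{10}^\vee$. Since $\sigma(V_1,V_6,V_{10})=0$ we have $V_6\subseteq\ker\omega$; for $\sigma$ general in $\cD^{1,6,10}$ the kernel is exactly $V_6$, so $\omega$ descends to a non‑degenerate form $\bar\omega$ on $W\coloneqq V_{10}/V_6$. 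For an $8$‑space $U_8\supseteq V_6$ the condition $\sigma(V_1,U_8,U_8)=0$ becomes $\bar\omega|_{U_8/V_6}=0$, i.e.\ $U_8/V_6$ is $\bar\omega$‑Lagrangian. Hence the linear condition cuts out $Q\coloneqq\Gr(2,W)\cap\bP(H_0)$ with $H_0=\ker\big(\langle\bar\omega,-\rangle\colon\bw2 W\to\CC\big)$ of dimension $5$; since $\bar\omega$ is non‑degenerate, $\bP(H_0)$ is not tangent to the Plücker quadric, and $Q$ is a smooth quadric threefold in $\bP(H_0)\cong\bP^4$.

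For the \emph{quadratic condition}, work over $Q$: let $\bar\cU_2\subseteq W\otimes\cO_Q$ be the tautological subbundle, let $\cE_8\subseteq V_{10}\otimes\cO_Q$ be its preimage (fibre $U_8$ at $[U_8]$), and set $\cE_7\coloneqq\cE_8/(V_1\otimes\cO_Q)$, so that $\cE_7$ has fibre $U_8/V_1$ and $\det\cE_7^\vee\cong\cO_Q(1)$. On $Q$ the vanishing of $\sigma(V_1,U_8,U_8)$ forces $\sigma|_{U_8}$ to factor through $\bw3(U_8/V_1)^\vee$, so $\sigma$ globalises to a section $\tilde\sigma\in H^0(Q,\bw3\cE_7^\vee)$; writing $\bar U_3\coloneqq U_4/V_1$, the existence of a $U_4$ as in \eqref{eqdefS1} is equivalent to the existence of a $3$‑plane $\bar U_3\subseteq U_8/V_1$ with $\tilde\sigma(\bar U_3,\bar U_3,U_8/V_1)=0$. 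Thus $S=\cD_Y(\tilde\sigma)$ is the orbital degeneracy locus attached to the $\GL_7$‑stable subvariety $Y\subseteq\bw3(\CC^7)^\vee$ of $3$‑forms admitting such a $3$‑plane. Over a fixed $P\in\Gr(3,7)$ the conditions $\tau(P,P,P)=0$ and $\tau(P,P,P')=0$ cut out codimension $1+3\cdot4=13$ in $\bw3(\CC^7)^\vee$ (of dimension $35$), so the associated incidence variety has dimension $(35-13)+12=34$ and $Y$ is a hypersurface; applying the class and canonical‑bundle formulas for orbital degeneracy loci (\cite{benedetti}) one computes that $\cD_Y(\tilde\sigma)$ is a divisor on $Q$ with $\omega_S\cong\cO_S$ and $[S]\cdot\cO_Q(1)^2=6$. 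In other words $S$ is cut out on $Q$ by a cubic, i.e.\ it is a $(2,3)$ complete intersection in $\bP^4$.

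For a $(2,3)$ complete intersection $S\subset\bP^4$, adjunction gives $\omega_S\cong\cO_S(2+3-5)=\cO_S$, the Lefschetz hyperplane theorem gives $H^1(\cO_S)=0$ and simple connectedness, and Riemann--Roch gives $\cO_S(1)^2=6$; hence $S$ is a K3 surface of degree $6$ in its Plücker embedding. It remains to prove the genericity assertion, that for $\sigma$ general in $\cD^{1,6,10}$ the cubic meets $Q$ transversally. I would do this either by a specialisation argument --- exhibit one explicit $\sigma\in\cD^{1,6,10}$ for which a \Macaulay computation verifies that $S$ is smooth, then invoke openness of smoothness in the family of surfaces $S$ over an open subset of $\cD^{1,6,10}$ --- or by a Bertini‑type transversality argument on the total space of the incidence correspondence, together with generic smoothness in characteristic zero. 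The points I expect to be genuinely delicate, rather than formal, are exactly the two that rest on a computation: checking that the quadratic condition cuts out precisely a cubic on $Q$ (here the orbit geometry of $3$‑forms on $\CC^7$ enters, and one must make sure the relevant orbit closure $Y$ is a reduced hypersurface of the expected kind), and establishing smoothness uniformly over the general member of $\cD^{1,6,10}$; the reduction to the quadric threefold and the K3 numerics are routine.
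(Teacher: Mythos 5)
The paper does not actually prove this statement: it is quoted verbatim from \cite[Proposition 4.5]{BenSong} in a review subsection, so there is no internal proof to compare against. Judged on its own terms, your reconstruction follows what is surely the intended route of the cited source, and the two reductions you make are correct: the linear condition $\sigma(V_1,U_8,U_8)=0$ is exactly $\bar\omega$-isotropy of $U_8/V_6$ for the nondegenerate form $\bar\omega$ on $V_{10}/V_6$ (the paper itself records this nondegeneracy in the lemma following Proposition \ref{prop:uniruled_divisor}), and $LG(2,4)$ is a smooth hyperplane section $Q$ of the Plücker quadric; the second condition is correctly recast as the orbital degeneracy locus $D_Y(\tilde\sigma)$ for $\tilde\sigma\in H^0(Q,\bw3\cE_7^\vee)$ with $\det\cE_7^\vee\cong\cO_Q(1)$.

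The one step you leave as ``one computes'' is, however, the crux, and it deserves to be closed rather than flagged. You can do it without a class formula: $Y$ is $\GL_7$-invariant and closed, your incidence count bounds $\dim Y\le 34$, and $\bw3(\CC^7)^\vee$ is a prehomogeneous space whose ring of $\SL_7$-semi-invariants is generated by a single irreducible relative invariant $f$ of degree $7$ (generic stabilizer $G_2$). Hence \emph{if} $\dim Y=34$ then $Y_{\mathrm{red}}=\{f=0\}$; since $f$ has weight $\det^{-3}$ (a degree-$7$ polynomial on $\bw3(\CC^7)^\vee$ scales by $\lambda^{-21}$ under $\lambda\,\mathrm{id}$), $f(\tilde\sigma)$ is a section of $(\det\cE_7^\vee)^{\otimes 3}=\cO_Q(3)$, i.e.\ precisely your cubic, and the $(2,3)$ complete-intersection conclusion follows. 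What genuinely remains open in your write-up is therefore: (i) that the projection from the $34$-dimensional incidence variety to $Y$ is generically finite (otherwise $\dim Y<34$, $S$ would have the wrong dimension, and the whole identification collapses) --- this needs an explicit check on one point of the codimension-one orbit; and (ii) smoothness, for which the clean route is your second option: the sections $\tilde\sigma$ arising from $\sigma\in\cD^{1,6,10}$ globally generate $\bw3\cE_7^\vee$ (the analogue of Lemma \ref{lem_gg_EB}), so \cite[Proposition 2.3]{benedetti} gives $\Sing S\subseteq D_{\Sing Y}(\tilde\sigma)$, and one must verify $\codim_{\bw3(\CC^7)^\vee}\Sing Y\ge 4$ so that this locus is empty in the threefold $Q$. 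Both are finite verifications in the orbit structure of trivectors in seven variables, but until they are done the argument is a (correct and well-aimed) sketch rather than a proof.
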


Next, one can construct a uniruled divisor $D$ in $X^\sigma_6$.

\begin{proposition}\cite[Proposition 4.7 and 4.8]{BenSong}
\label{prop:uniruled_divisor}
For a general~$\sigma$ in the divisor~$\cD^{1,6,10}$,
the set
\[D\coloneqq\setmid{[U_6]\in X^\sigma_6}{\exists [V_4\subset V_8]\in S\quad
V_4\subset U_6\subset V_8}\]
defines a divisor in $X^\sigma_6$ which has a smooth conic fibration $\pi\colon
D\to S$ over the K3 surface $S$. Moreover $[V_6]\in X_6^\sigma$ belongs to $D$ if and only if $V_1\subset V_6$.
\end{proposition}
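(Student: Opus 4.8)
The statement has two parts: first, that $D$ is a divisor in $X_6^\sigma$ carrying a smooth conic fibration $\pi\colon D\to S$; second, the characterization that $[V_6]\in X_6^\sigma$ lies on $D$ precisely when $V_1\subset V_6$. I would take the fibration structure as the organizing principle. Fix a point $[V_4\subset V_8]\in S$; by the description \eqref{eqdefS1} of $S$, we have $V_1\subset V_4$, $\sigma(V_1,V_8,V_8)=0$, and $\sigma(V_4,V_4,V_8)=0$ (here $V_8$ is the preimage in $V_{10}$ of the corresponding $U_8\subset V_{10}/V_6$, so $V_6\subset V_8$, but note $V_4$ is a different $4$-space — one must be careful that the notation $[V_4\subset V_8]\in S$ in the Proposition refers to lifts of the data in \eqref{eqdefS1}). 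The fiber $\pi^{-1}([V_4\subset V_8])$ consists of the $[U_6]$ with $V_4\subset U_6\subset V_8$ and $\sigma|_{U_6}=0$, i.e. the locus in $\Gr(2,V_8/V_4)\cong \Gr(2,4)$ cut out by the condition $\sigma(V_4,U_6,U_6)=0$ together with $\sigma|_{U_6/V_4}$ restricted appropriately. The plan is to show this cuts out a conic: on $\Gr(2,V_8/V_4)$ the trivector $\sigma$ induces, via the vanishing $\sigma(V_4,V_4,V_8)=0$ and $\sigma(V_1,V_8,V_8)=0$, a section of a rank-$3$ bundle (something like $\wedge^2\cU^\vee\oplus(V_4/V_1)^\vee\otimes\det\cU^\vee$ or a suitable twist) whose vanishing is a plane conic inside the Plücker $\bP^2$, and I would identify the relevant bundle by a direct bookkeeping of which trilinear pairings survive.

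For the statement that $D$ is a \emph{divisor}, I would argue $\dim D = 3 = \dim X_6^\sigma - 1$: the base $S$ is a surface by Proposition \ref{propK3S}, the generic fiber is a conic, hence one-dimensional, so $\dim D\le 3$, and then either exhibit a four-dimensional family is impossible, or simply note $D$ is irreducible (being conic-fibered over the irreducible $S$ away from the locus of degenerate conics) of dimension $3$, while $X_6^\sigma$ is irreducible of dimension $4$, so $D$ is a divisor provided $D\ne X_6^\sigma$ — and $D=X_6^\sigma$ would force \emph{every} $[U_6]$ to contain some $V_4\in S$, contradicting dimension count. Smoothness of the conic fibration (i.e. every fiber is a smooth conic, not just the generic one) would follow from a genericity argument on $\sigma\in\cD^{1,6,10}$: the locus in $S$ over which the conic degenerates is a priori a curve, and one shows it is empty for general $\sigma$, perhaps by the incidence-variety/dimension argument of \cite{BenSong} or by checking the conic cannot degenerate without producing a forbidden flag (which would push $\sigma$ into a smaller stratum). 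Since both propositions are quoted verbatim from \cite{BenSong}, I would in fact simply cite the proof there and only re-derive what is needed downstream.

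The last sentence — $[V_6]\in D\iff V_1\subset V_6$ — I would handle separately. One direction: if $V_1\subset V_6$, then since $\sigma(V_1,V_6,V_{10})=0$ one checks (again by the extension lemma for isotropic subspaces of a rank-$\le 6$ alternating form, of the kind already used in the proof of Theorem \ref{thm_rationality_3310}) that $V_6$ sits in a flag $V_4\subset V_8$ of the required type, i.e. $[V_6]\in D$. For the converse, if $[V_6]\in D$ then $V_4\subset V_6$ for some $[V_4\subset V_8]\in S$, and since $V_1\subset V_4$ by \eqref{eqdefS1}, we get $V_1\subset V_6$. The only subtlety is checking, in the first direction, that the $V_8$ and $V_4$ one produces genuinely satisfy all the defining conditions of a point of $S$ in \eqref{eqdefS1}, and that $V_6$ really lies \emph{strictly between} them; this is the part I would expect to need the most care, and where a transversality/genericity hypothesis on $\sigma$ may be invoked.

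Overall, the \textbf{main obstacle} is the clean identification of the fiber of $\pi$ with a plane conic and the verification that this conic is smooth for every point of $S$ when $\sigma$ is general — i.e. the global structure of the conic fibration rather than the pointwise computation, which is routine linear algebra on alternating forms.
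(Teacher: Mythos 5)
The paper does not prove this proposition at all: it is imported verbatim from \cite{BenSong} (Propositions 4.7 and 4.8), and your closing decision to ``simply cite the proof there'' is exactly what the authors do. So there is nothing in the paper to compare your sketch against line by line; I can only assess the sketch on its own terms.

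Your central geometric identification is correct and is indeed how the fibration works: for $[V_4\subset V_8]\in S$ the conditions $\sigma(V_4,V_4,V_8)=0$ kill the pieces $\wedge^3V_4$ and $\wedge^2V_4\otimes(U_6/V_4)$ of $\wedge^3U_6$, and $\sigma(V_1,V_8,V_8)=0$ kills the $V_1\otimes\wedge^2(U_6/V_4)$ part, so the only surviving condition on $[U_6]\in\Gr(2,V_8/V_4)$ is $\sigma(V_4/V_1,\wedge^2(U_6/V_4))=0$. This is a section of the rank-$3$ bundle $(V_4/V_1)^\vee\otimes\det\cU^\vee$ alone --- your tentative extra summand $\wedge^2\cU^\vee$ is redundant, since that condition is already vacuous by $\sigma(V_1,V_8,V_8)=0$ --- and its zero locus is $\Gr(2,4)\cap\bP^2$ inside the Pl\"ucker $\bP^5$, i.e.\ a conic. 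The dimension count for $D$ being a divisor then goes through as you say (the top Chern class $\sigma_1^3\neq 0$ guarantees nonempty fibers). The two places where your plan is genuinely incomplete, and where the content of \cite{BenSong} lives, are: (i) smoothness of \emph{every} fiber for general $\sigma$, which you correctly flag as requiring a degeneration/transversality argument you do not supply; and (ii) the converse direction of the membership criterion. For (ii) the easy half ($[U_6]\in D\Rightarrow V_1\subset U_6$ via $V_1\subset V_4$) is as you say, but the other half requires, given $V_1\subset U_6$ with $\sigma|_{U_6}=0$, actually manufacturing a flag $V_4\subset V_8$ satisfying \emph{all} the conditions of \eqref{eqdefS1} with $V_4\subset U_6\subset V_8$; the isotropy of $U_6$ for the rank-$4$ form $\sigma(V_1,\cdot,\cdot)$ gives $\dim(U_6\cap V_6)\geq 4$ and hence a candidate $V_8\supseteq U_6+V_6$, but producing the $V_4$ with $\sigma(V_4,V_4,V_8)=0$ is not just the extension lemma from the proof of Theorem \ref{thm_rationality_3310}, and you rightly identify this as the step needing care. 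Since you defer exactly these points to the cited source, your proposal is consistent with the paper but should not be read as a self-contained proof.
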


In \cite{BenSong} it was shown that the Brauer class associated with the $\bP^1$-fibration $D\to S$ defines a twisted K3 surface $(S,\beta)$. Then one can prove that $X_6^\sigma$ is isomorphic to a moduli space of twisted sheaves on $(S,\beta)$.

\begin{proposition}
\label{prop_trivial_twisting}
If there exists a $2$-dimensional cycle inside $D$ whose intersection with a general fiber $l$ of $D\to S$ is odd then the twisting of $(S,\beta)$ is trivial.
\end{proposition}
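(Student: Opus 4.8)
The plan is to use the standard argument relating Brauer classes of conic (or $\bP^1$-) bundles to the existence of a multisection of odd degree, exactly as in the cubic-fourfold-with-a-plane case reviewed above. Concretely, the conic fibration $\pi\colon D\to S$ from Proposition \ref{prop:uniruled_divisor} determines an element of the Brauer group $\Br(S)$: locally over $S$ the fibers are plane conics, and the obstruction to the conic bundle being the projectivization of a rank-$2$ vector bundle (equivalently, to $D\to S$ admitting a rational section) is precisely the $2$-torsion Brauer class $\beta$. The twisting of $(S,\beta)$ is by definition this class, so triviality of the twisting is equivalent to the vanishing of $\beta$ in $\Br(S)$.

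The key steps, in order, would be the following. First I would recall (or cite) that for a conic bundle $\pi\colon D\to S$ over a smooth surface, the associated Brauer class lies in the $2$-torsion subgroup $\Br(S)[2]$; this is classical (Artin--Mumford, or the description via even Clifford algebras / the Brauer group of the generic fiber $D_{\eta}$, which is a conic over $\CC(S)$ whose class in $\Br(\CC(S))$ is $2$-torsion). Second, I would invoke the compatibility of this class with pushforward/pullback of cycles: if $T\subset D$ is a $2$-dimensional cycle, then $\pi_*[T]$ is a $0$-cycle on $S$ of degree $\deg(T\cdot l)$, where $l$ is the class of a general fiber, and there is a natural pairing (coming from the exact sequence $0\to \Br(S)\to \Br(\CC(S))\to \bigoplus_{v}H^1(\kappa(v),\QQ/\ZZ)$, or more simply from the fact that a multisection of odd degree splits the conic) under which a multisection of odd degree kills a $2$-torsion class. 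Third, I would spell out the conclusion: the hypothesis gives a $2$-dimensional cycle $T\subset D$ with $T\cdot l$ odd; restricting $T$ to the generic fiber $D_\eta$ yields a closed point of the conic $D_\eta$ over $\CC(S)$ of odd degree, which forces $D_\eta$ to have a rational point over an odd-degree extension, hence (since the class is $2$-torsion) $D_\eta\cong \bP^1_{\CC(S)}$ and the Brauer class of the conic vanishes. Therefore $\beta=0$ in $\Br(S)$ and the twisting is trivial.

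The main obstacle, and the step I would be most careful about, is making precise the identification of ``the twisting $\beta$ of $(S,\beta)$'' (as it arises in \cite{BenSong} via the moduli-of-twisted-sheaves description of $X_6^\sigma$) with ``the Brauer class of the conic bundle $\pi\colon D\to S$.'' In the cubic-fourfold case this identification is exactly Voisin's and Hassett's observation that the $\bP^1$-fibration on the uniruled divisor carries the relevant Brauer class; here I would either cite the analogous statement from \cite{BenSong} directly (it is implicit in the sentence ``the Brauer class associated with the $\bP^1$-fibration $D\to S$ defines a twisted K3 surface $(S,\beta)$'') or, if a self-contained argument is wanted, note that $D$ is birational over $S$ to the $\bP^1$-bundle $\bP(\mathcal{E})$ for some $\alpha$-twisted rank-$2$ sheaf $\mathcal{E}$ whose twisting is $\beta$, so that a rational section of $D\to S$ is equivalent to untwisting $\mathcal{E}$, i.e.\ to $\beta=0$. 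Once this dictionary is in place, the argument is short: an odd-degree multisection yields a splitting of the $2$-torsion obstruction, hence $\beta$ is trivial.

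I should also remark that, just as in the cubic case, one does not actually need a section but only a cycle of odd fiber-degree, because the only thing being used is parity against a $2$-torsion class; and the reduction from ``arbitrary $2$-cycle of odd fiber-degree'' to ``odd-degree closed point of the generic fiber'' is the routine step of intersecting with the generic fiber and discarding the fiberwise-vertical components, which do not change the odd parity.
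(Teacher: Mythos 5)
Your proposal is correct and follows essentially the same route as the paper: both arguments identify $\beta$ with the Brauer class of the smooth conic fibration $\pi\colon D\to S$ (as set up in \cite{BenSong}) and then observe that an odd-degree multisection kills this $2$-torsion class. The paper packages the last step as ``$\cO(d)$ for $d$ odd together with the relative degree-$2$ canonical class yields a relative $\cO(1)$, hence a $k_S$-rational point of the generic conic,'' which is exactly your index/Springer-type argument on the generic fiber in line-bundle language.
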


\begin{proof}
The projection $D\to S$ is actually a smooth conic fibration (by construction), so the result follows by criteria of rationality of conic fibrations. Indeed, rationality of $D$ over the fraction field $k_S$ of $S$ is equivalent to the existence of a $k_S$-rational point. This point corresponds to the class of a relative $\cO(1)$ over $D\to S$, thus showing that the relative bundle $\cO(2)$ is in fact divisible, i.e. that $\beta$ is trivial in the Brauer group. In order for the class of $\cO(1)$ to exist, since $\cO(2)=K_{D/S}$ is the relative canonical bundle, it is sufficient that the class of $\cO(d)$ exists for $d$ odd, which is ensured by the hypothesis in the statement.
\end{proof}

\subsection{Birational models of the Peskine sixfold}
\label{sec_bir_models}

In this section we construct two birational models $X_{1.1}^\sigma$ and $X_{1.2}^\sigma$ of the Peskine sixfold. The latter will become important later, as it can be described in a very explicit way and its rationality can be understood quite well. Let us start with a lemma.

\begin{lemma}
\label{lemma_cubic}
The intersection $\cC:=X_1^\sigma \cap \bP(V_6)$ is a cubic fourfold.
\end{lemma}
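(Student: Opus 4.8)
The plan is to show that $\cC = X_1^\sigma \cap \bP(V_6)$ is cut out inside $\bP(V_6) \cong \bP^5$ by a single cubic equation, which (for general $\sigma$ in the divisor) defines an irreducible fourfold. Recall that $X_1^\sigma \subset \bP(V_{10})$ is defined by the condition $\rank \sigma(U_1, V_{10}, V_{10}) \le 6$, i.e. $X_1^\sigma$ is the rank $\le 6$ locus of the $9 \times 9$ skew-symmetric matrix $\sigma(U_1, \cdot, \cdot)$ on $(V_{10}/U_1)$. Equivalently, $X_1^\sigma$ is the vanishing locus of the section of $\bw4 \cQ^\vee(2)$ on $\bP(V_{10})$ given by $\sigma \wedge \sigma$ (the $(2k+1) \times (2k+1)$ sub-Pfaffians of a skew form of size $9$ vanish iff the rank drops to $\le 6$, and the appropriate Pfaffian-type conditions are cubic in $\sigma$), so that set-theoretically $X_1^\sigma$ is the singular locus of the Pfaffian hypersurface, as recalled in the introduction.

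First I would restrict everything to the point $[V_1]$, which by the definition of $\cD^{1,6,10}$ is the distinguished line satisfying $\sigma(V_1, V_6, V_{10}) = 0$; this is precisely the (generically isolated) singular point of $X_1^\sigma$, and it lies in $\bP(V_6)$. For a point $[U_1] \in \bP(V_6)$, I would analyze the skew form $\sigma(U_1, \cdot, \cdot)$ on $V_{10}/U_1$: the hypothesis $\sigma(V_1, V_6, V_{10}) = 0$ forces, when $U_1 \subset V_6$, a large isotropic subspace, namely (the image of) $V_6$ together with whatever $V_1$ contributes. Concretely, for $U_1 \subset V_6$ the form $\sigma(U_1,\cdot,\cdot)$ on $V_{10}/U_1$ has $V_6/U_1$ (of dimension $5$) contained in its kernel-plus-isotropic part in a way that automatically caps the rank; a short count shows the rank is generically exactly $6$ rather than $8$ on all of $\bP(V_6)$, so the rank $\le 6$ condition, which is cut out by the Pfaffians of size $8$ (a degree-$4$ condition in general) on $\bP(V_{10})$, degenerates on $\bP(V_6)$ to a single nontrivial degree-$3$ condition — the relevant sub-Pfaffian — because one factor is forced to vanish identically. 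This is the computation I expect to be the technical heart: carefully identifying which Pfaffian survives and checking it is genuinely cubic and not identically zero.

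Next, I would confirm irreducibility and dimension. Since $X_1^\sigma$ has codimension $3$ in $\bP(V_{10})$ and is not contained in the hyperplane-type locus $\bP(V_6)$ (it is $6$-dimensional and swept out beyond $\bP(V_6)$), the scheme-theoretic intersection $\cC$ has the expected dimension $4$; as it is defined by one cubic in $\bP(V_6) \cong \bP^5$, it is a cubic fourfold as soon as that cubic is nonzero, which the generic computation in the previous step provides. Reducedness and (generic) smoothness for general $\sigma \in \cD^{1,6,10}$ can be checked by a dimension count in the incidence variety $\{(\sigma, [U_1])\}$, or alternatively verified on an explicit example with \Macaulay. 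I would close by noting that $[V_1] \in \cC$ is a node of this cubic fourfold, which foreshadows the birational models $X_{1.1}^\sigma$, $X_{1.2}^\sigma$ built by projection from $[V_1]$ in the subsequent subsection.
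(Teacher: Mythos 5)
Your overall strategy---restrict to $\bP(V_6)$ and show that the rank conditions collapse to a single cubic equation---is the paper's strategy, but the step you yourself flag as ``the technical heart'' is left undone, and the heuristics you offer in its place are wrong. First, the claim that the rank of $\sigma(U_1,\cdot,\cdot)$ is ``generically exactly $6$ rather than $8$ on all of $\bP(V_6)$'' cannot be right: if that held, all of $\bP(V_6)$ would lie in $X_1^\sigma$ and $\cC$ would be $\bP^5$, not a cubic fourfold. Nor is $V_6/U_1$ isotropic for $\sigma(U_1,\cdot,\cdot)$ in general---the divisor condition only constrains $\sigma(V_1,V_6,V_{10})$. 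The one thing the condition does give, and the actual engine of the proof, is that for $[U_1]\in\bP(V_6)\setminus\{[V_1]\}$ the $2$-dimensional space $U_1+V_1$ lies in the kernel of $\sigma(U_1,\cdot,\cdot)$ (because $\sigma(U_1,V_1,V_{10})\subset\sigma(V_1,V_6,V_{10})=0$). Hence the form descends to an $8\times 8$ skew form on $V_{10}/(U_1+V_1)$, and ``rank $\le 6$'' becomes the vanishing of its single Pfaffian---a codimension-one condition. The generic rank on $\bP(V_6)$ is still $8$; only the codimension of the degeneracy drops.

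Second, the degree count, which is the whole content of the lemma, is missing. The Pfaffian of an $8\times 8$ skew matrix with entries linear in $U_1$ is naively a quartic, and it does not become a cubic ``because one factor is forced to vanish identically''---the Pfaffian does not factor. The degree drops because the $8\times 8$ form lives on the non-trivial bundle $V_{10}\otimes\cO/(\cO(-1)\oplus V_1)$, whose determinant contributes a twist: the Pfaffian is a section of $\cO(4)\otimes\det\bigl(V_{10}\otimes\cO/(\cO(-1)\oplus V_1)\bigr)^\vee\cong\cO(4)\otimes\cO(-1)=\cO(3)$. (Equivalently: the skew matrix of $8\times 8$ sub-Pfaffians of the $10\times 10$ matrix $\sigma(u,\cdot,\cdot)$ has degree-$4$ entries and, having rank $\le 2$ with image the kernel $\langle u, V_1\rangle$, equals $\mu(u)\,u\wedge e_0$ with $u\wedge e_0$ linear in $u$; the cubic is $\mu$.) Without this computation one cannot rule out that $\cC$ is a quartic. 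You would also need the Hartogs-type extension of this Pfaffian across $[V_1]$, where $U_1+V_1$ is only one-dimensional and the construction above breaks down; the paper handles this explicitly and your proposal does not mention it.
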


\begin{proof}
Let us look at the equations that define $\cC$. Let us consider a point $[U_1]\in \cC$, and let us suppose that $[U_1]\neq[V_1]$. 
Since $U_1\subset V_6$ we have $\sigma(U_1,U_1+V_1,V_{10})=0$, i.e. $U_1+V_1\in \ker \sigma(U_1,\cdot,\cdot)$. Thus $[U_1]\mapsto \sigma(U_1,\cdot,\cdot)|_{V_{10}/(U_1+V_1)}$ defines a section $s$ of $\wedge^2 \cO(1)\otimes(V_{10}/(\cO(-1)+V_1))^\vee$ over $\bP(V_6)\setminus [V_1]$. Clearly $\sigma(U_1,\cdot,\cdot)$ has rank $\leq 6$ if and only if $s(U_1)$ has rank $\leq 6$ as a skew-symmetric matrix of size $8\times 8$. 
Thus its Pfaffian $\Pf(s)$ is well-defined over $\bP(V_6)\setminus [V_1]$ and can be extended to $\bP(V_6)$ by Hartogs' Lemma. We deduce that $\cC=\{\Pf(s)=0\}$ is a hypersurface in $\bP(V_6)$. 
Over $\bP(V_6)\setminus [V_1]$, the element $\Pf(s)$ is a section of 
$\det((V_{10}/(\cO(-1)+V_1))^\vee)\otimes \cO(8/2)=\det((V_{10}/\cO(-1))^\vee)\otimes V_1\otimes \cO(4)|_{\bP(V_6)\setminus[V_1]}\cong \cO(3)|_{\bP(V_6)\setminus[V_1]}$, thus its extension is a section of $\cO(3)$ over $\bP(V_6)$.
\end{proof}

\begin{remark}
We suspect that the cubic is singular at $[V_1]$. Indeed, the fact that from an element $\sigma\in \cD^{1,6,10}$ we recover a singular cubic fourfold would not be surprising. We have seen that a general $\sigma\in \cD^{1,6,10}$ defines a general $K3$ surface $S$ of degree $6$, and it is well known that to a general singular cubic fourfold one can associate a $K3$ surface $S'$ of degree $6$ such that the Fano variety of lines of the cubic is birational to the Hilbert scheme $S'^{[2]}$. It is then natural to ask the question whether the Fano variety of lines of $\cC$ is birational to $S^{[2]}$. 
\end{remark}

Let us consider the projection $X_1^\sigma \dashrightarrow \bP(V_{10}/V_6)\cong \bP^3$. This is not defined at $\cC= X_1^\sigma \cap\bP(V_6)$. We want to understand what is the closure of the general fiber of a point of this rational map, outside of the indeterminacy locus. Therefore we consider a general point in $\bP(V_{10}/V_6)$, which corresponds to a subspace $U_7$ such that $V_6\subset U_7\subset V_{10}$. Let us denote by $\sigma_l:=\sigma(l,\cdot,\cdot)|_{V_{10}/l}$ for $l\in \bP(V_{10})$. The closure $T_{U_7}$ of the fiber (outside of $\cC$) over $[U_7]$ of the rational map is the closure of
\[
T^0_{U_7}:=\{ [l]\in \bP(U_7) \mid l\notin \bP(V_6)\mbox{ and }\rank(\sigma_l)\leq 6 \}.
\]

\begin{lemma}
A general trivector $\sigma\in \cD^{1,6,10}$ defines a non-degenerate two-form $\omega:=\sigma_{V_1}|_{V_{10}/V_6}\in \wedge^2 (V_{10}/V_6)^\vee \cong H^0(\bP(V_{10}/V_6), \cQ^\vee(1))$, and an injection $\cO(-1) \subset \Omega(1) \subset (V_{10}/V_6)\otimes \cO$ such that $(V_{10}/V_6)/\Omega(1) \cong \cO(1)$.
\end{lemma}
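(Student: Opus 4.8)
The plan is to treat the two assertions in turn: the non-degeneracy of $\omega$ is the only statement with genuine content, and the chain of bundle inclusions is then purely formal.

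First I would record that $\omega$ is well defined. Since $V_1\subset V_6$ and $\sigma(V_1,V_6,V_{10})=0$, the alternating form $\sigma(V_1,\cdot,\cdot)$ on $V_{10}$ kills every pair having an entry in $V_6$, so it factors through $V_{10}/V_6$ and defines $\omega=\sigma_{V_1}|_{V_{10}/V_6}\in\wedge^2(V_{10}/V_6)^\vee$ (equivalently $V_6/V_1\subset\ker\sigma_{V_1}$). The identification $\wedge^2(V_{10}/V_6)^\vee\cong H^0(\bP(V_{10}/V_6),\cQ^\vee(1))$ is the standard one: an alternating form sends $\cO(-1)$ to $\cQ^\vee$ by $l\mapsto\sigma(l,\cdot,\cdot)$ (which lands in $(V_{10}/l)^\vee$ since $\sigma(l,l,\cdot)=0$), and under this dictionary $\omega$ becomes a section of $\cQ^\vee(1)$ vanishing at $[l]$ precisely when $l\subset\ker\omega$; so non-degeneracy of $\omega$ is the same as this section being nowhere zero.

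For the non-degeneracy, suppose $0\ne\bar u\in\ker\omega$ and lift $\bar u$ to $u\in V_{10}\setminus V_6$. Then $\sigma(V_1,\langle u\rangle,V_{10})=0$, so $V_7\coloneqq V_6+\langle u\rangle$ is $7$-dimensional and satisfies $\sigma(V_1,V_7,V_{10})=0$; hence every $6$-dimensional subspace $V_6'$ with $V_1\subset V_6'\subset V_7$ also satisfies $\sigma(V_1,V_6',V_{10})=0$. As there is a $\bP^5$ of such $V_6'$ (all but one of which differ from $V_6$), we obtain a distinguished flag $[V_1\subset V_6']$ other than $[V_1\subset V_6]$, contradicting the uniqueness of the latter for a general $\sigma\in\cD^{1,6,10}$ recalled at the start of this section (from \cite{BenSong}). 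Hence $\ker\omega=0$ and $\omega$ is symplectic on the $4$-dimensional space $V_{10}/V_6$.

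Finally, writing $W\coloneqq V_{10}/V_6$, I would use $\omega$ to identify $W$ with $W^\vee$ and transport the Euler sequence. Precomposing the surjection $W^\vee\otimes\cO\to\cO(1)$ dual to $\cO(-1)\hookrightarrow W\otimes\cO$ with the isomorphism $\omega\colon W\simto W^\vee$ gives a surjection $W\otimes\cO\to\cO(1)$ which at $[l]$ is $w\mapsto\omega(w,l)$; its kernel is a rank-$3$ subbundle $\Omega(1)\subset W\otimes\cO$, isomorphic via $\omega$ to $\Omega^1_{\bP(W)}(1)$ (so the notation is justified) and with quotient $\cO(1)$. Since $\omega(l,l)=0$, the line $\cO(-1)_{[l]}=l$ lies in the fibre $\Omega(1)_{[l]}$, the $3$-dimensional space of $w$ with $\omega(w,l)=0$, for every $[l]$; hence $\cO(-1)\subset\Omega(1)$ as subbundles of $W\otimes\cO$, which gives the required chain. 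The only non-routine ingredient is the uniqueness of the distinguished flag used above; the single point to get right at the end is that the inclusion $\cO(-1)\subset\Omega(1)$ is precisely the $\omega$-isotropy of lines.
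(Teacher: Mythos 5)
Your proof is correct and follows the same overall route as the paper: the bundle $\Omega(1)$ is defined fibrewise as the $\omega$-orthogonal $p^\perp$, with $p\subset p^\perp$ by isotropy of lines and with quotient $\cO(1)$ via $\omega$ itself. The one place you go further is the non-degeneracy of $\omega$: the paper simply asserts that $\sigma(V_1,\cdot,\cdot)$ has rank four on $V_{10}/V_6$ for general $\sigma$, whereas you deduce it from the uniqueness of the distinguished flag $[V_1\subset V_6]$ (any kernel vector would yield a $\bP^5$ of flags) --- a valid and more self-contained argument.
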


\begin{proof}
For a general $\sigma\in \cD^{1,6,10}$, $\sigma(V_1,\cdot,\cdot)$ is a rank four two-form on $V_{10}/V_6$, thus $\omega$ is non-degenerate. The bundle $\Omega(1)$ is defined fiberwise as follows: for any point $p\in \bP(V_{10}/V_6)$, $\Omega(1)_p:=\{ v\in V_{10}/V_6 \mid w(p,v)=0\}=p^\perp \cong \CC^3$. Clearly $p\subset \Omega(1)_p$ and $(V_{10}/V_6)/\Omega(1)_p \cong p^\vee$, where the isomorphism is given by $w$ itself.
\end{proof}

\begin{remark}
Since $\Omega(1)_p$ can be identified with $p^\perp$, we will denote it by $\cO(-1)^\perp \cong \Omega(1)$, where $\perp$ is with respect to $\omega$, in order to make the dependance from $\omega$ explicit. Moreover notice that $\cN:=\cO(-1)^\perp / \cO(-1)$ is a rank-$2$ null-correlation bundle.
\end{remark}

From the previous lemma, for any $U_7\in \bP(V_{10}/V_6)$ one can consider a nine-dimensional subspace $U_7^\perp:=\cO(-1)^\perp_{U_7}+V_6 \subset V_{10}$. By letting $U_7\in \bP(V_{10}/V_6)$ vary, one obtains a rank-nine vector bundle $\cV_9\subset V_{10}\otimes \cO$ which fits in a short exact sequence
\[
0\to V_6\otimes\cO \to \cV_9 \to \cO(-1)^\perp \to 0.
\]
Notice that, by construction, $V_1\subset \ker \sigma|_{U_7^\perp}$. Since $\cV_9$ is a subbundle of the trivial bundle $V_{10}\otimes \cO$, for any $U_7\in\bP(V_{10}/V_6)$, the section $\sigma$ defines a section $$\sigma'_{U_7}\in H^0 (\bP(U_7)\setminus [V_1], \wedge^2 (U_7^\perp/\cO(-1)+V_1)^\vee(1));$$ indeed, for any $l\in \bP(U_7)$, $l\neq V_1$, set $\sigma'_{U_7}(l):=\sigma_l|_{U_7^\perp/l+V_1}$.

\begin{lemma}
\label{lemma_fiber_T}
Let $U_7$ be a point in $\bP(V_{10}/V_6)$, and let $l\in \bP(U_7)\setminus \cC$. Then $l\in T_{U_7}^0$ if and only if $\rank(\sigma'_{U_7}(l))= 4$.
\end{lemma}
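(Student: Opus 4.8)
The plan is to compare the $9\times 9$ alternating form $\sigma_l$ on $V_{10}/l$ with the $7\times 7$ alternating form $\sigma'_{U_7}(l)$ on $U_7^\perp/(l+V_1)$ by elementary symplectic linear algebra, using the chains $l\subset U_7\subset U_7^\perp$ and $V_1\subset V_6\subset U_7^\perp$ inside $V_{10}$ together with the relation $\sigma(V_1,V_6,V_{10})=0$. I will use two standard facts about an alternating form $\beta$ on a finite-dimensional space $W$: (a) if $R_0\subset\operatorname{rad}\beta$ is a subspace, the form induced on $W/R_0$ has the same rank as $\beta$; (b) if $H\subset W$ is a hyperplane then $\rank(\beta|_H)$ is either $\rank\beta$ or $\rank\beta-2$, and it is $\rank\beta-2$ exactly when $\operatorname{rad}\beta\subset H$. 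Both follow from $\rank\beta=\dim W-\dim\operatorname{rad}\beta$ and a direct computation of the radical of the restricted, respectively quotiented, form.

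First I would fix the geometry. Writing $\pi\colon V_{10}\to V_{10}/V_6$ and $p\coloneqq U_7/V_6$, one has $U_7=\pi^{-1}(p)$ and, by the definition of $U_7^\perp$ via $\omega$, $U_7^\perp=\pi^{-1}(p^{\perp})$; hence $l\subset U_7\subset U_7^\perp$ and $V_1\subset V_6\subset U_7^\perp$, and $V_1\ne l$ since $V_1\in\cC$ while $l\notin\cC$. In particular $U_7^\perp/l$ is a hyperplane in $V_{10}/l$ and $(l+V_1)/l$ is a line in it. Because $\sigma(V_1,\cdot,\cdot)$ kills $V_6$ in either argument it descends to $\omega$, i.e. $\sigma(v_1,a,b)=\omega(\pi a,\pi b)$ for $0\ne v_1\in V_1$; from this I extract two containments. (i) $\sigma(V_1,l,U_7^\perp)=\omega(\pi l,p^{\perp})=0$, since $\pi l\in p$; thus $(l+V_1)/l\subset\operatorname{rad}\big(\sigma_l|_{U_7^\perp/l}\big)$ — this is exactly what makes $\sigma'_{U_7}(l)$ well defined as the form induced by $\sigma_l|_{U_7^\perp/l}$ on $U_7^\perp/(l+V_1)$, and, by (a), it yields $\rank\sigma'_{U_7}(l)=\rank\big(\sigma_l|_{U_7^\perp/l}\big)$. (ii) $\ker\sigma_l\subset U_7^\perp/l$: if $v\in\ker\sigma(l,\cdot,\cdot)$ then $\sigma(V_1,l,v)=\sigma(l,v,V_1)=0$, so $\omega(\pi l,\pi v)=0$; if $l\notin V_6$ then $\pi l$ spans $p$, forcing $\pi v\in p^{\perp}$ and $v\in U_7^\perp$, while if $l\in V_6$ then $\rank\sigma_l=8$ (as $l\in\bP(V_6)\setminus\cC$ gives $l\notin X_1^\sigma$) and $\ker\sigma(l,\cdot,\cdot)=\langle l,V_1\rangle\subset V_6\subset U_7^\perp$.

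Combining (i) with (b) applied to the hyperplane $U_7^\perp/l$, which contains $\operatorname{rad}\sigma_l=\ker\sigma_l$ by (ii), gives
\[\rank\sigma'_{U_7}(l)=\rank\big(\sigma_l|_{U_7^\perp/l}\big)=\rank\sigma_l-2.\]
Now for general $\sigma\in\cD^{1,6,10}$ one has $\rank\sigma_l\ge 6$ for all $l\in\bP(U_7)\setminus\cC$: the locus $\{\rank\sigma_l\le 4\}$ is contained in $\Sing X_1^\sigma=\{[V_1]\}\subset\bP(V_6)\subset\cC$ (a point where $\sigma_l$ has rank $\le 4$ is automatically a singular point of the Pfaffian $X_1^\sigma$), so no such $l$ has rank $\le 4$; and if $l\in\bP(V_6)\setminus\cC$ then $l\notin X_1^\sigma$, whence $\rank\sigma_l=8$. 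Hence $\rank\sigma'_{U_7}(l)\in\{4,6\}$, and $\rank\sigma'_{U_7}(l)=4\iff\rank\sigma_l=6\iff l\in X_1^\sigma$; since $l\notin\cC=X_1^\sigma\cap\bP(V_6)$, membership $l\in X_1^\sigma$ already forces $l\notin\bP(V_6)$, so $l\in X_1^\sigma\iff l\in T^0_{U_7}$, which is the claimed equivalence.

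The delicate points are the two radical containments (i) and (ii): (i) is what legitimizes the very definition of $\sigma'_{U_7}(l)$ on the subquotient $U_7^\perp/(l+V_1)$, and (ii) is what forces the rank to drop by exactly $2$ (rather than $0$) on passing to the hyperplane $U_7^\perp/l$; both rest on $\sigma(V_1,V_6,V_{10})=0$ and on the definition $U_7^\perp=\pi^{-1}(p^{\perp})$. The other thing to watch is the genericity input $\{\rank\sigma_l\le 4\}\subset\bP(V_6)$: without it, a point $l\notin\bP(V_6)$ with $\rank\sigma_l\le 4$ would lie in $T^0_{U_7}$ yet satisfy $\rank\sigma'_{U_7}(l)\le 2$, breaking the equivalence.
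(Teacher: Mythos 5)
Your proof is correct and follows essentially the same route as the paper's: both arguments compare $\rank(\sigma_l)$ with $\rank(\sigma'_{U_7}(l))$ through the hyperplane $U_7^\perp/l$ of $V_{10}/l$, using that $(l+V_1)/l$ lies in the radical of the restriction and that $\ker\sigma_l\subset U_7^\perp$, and both invoke the same genericity input that $[V_1]$ is the only point with $\rank(\sigma_l)\le 4$. Your version merely packages the two implications into the single identity $\rank(\sigma'_{U_7}(l))=\rank(\sigma_l)-2$ and spells out the containment $\ker\sigma_l\subset U_7^\perp$ that the paper dismisses as clear, which is a welcome but not substantially different refinement.
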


\begin{proof}
Let $0\neq v\in (V_{10}/ U_7^\perp)^\vee$. If $l$ is such that $\rank(\sigma'_{U_7}(l))\leq 4$, we can write $\sigma_l=\tilde{\sigma'_{l}} + v\wedge v'$ for $v'\in V_{10}^\vee$, where $\tilde{\sigma'_{l}}$ is a lift of $\sigma_{l}|_{U_7^\perp}$ to $\wedge^2 (V_{10}/l)^\vee$. Thus $\tilde{\sigma'_{l}}$ has rank at most four and $\rank(\sigma_{l})\leq 6$, proving that $l\in X_1^\sigma$.

Conversely, assume that $l\in T_{U_7}^0$, i.e. $l\in X_1^\sigma$. This means that $\rank(\sigma_l)= 6$. Let $K:=\ker \sigma_l$ be the four-dimensional kernel in $V_{10}$. Since $l\notin \bP(V_6)$ we have that $V_1\cap K=\{0\}$, and clearly $K\subset U_7^\perp$. Since $V_1\subset \ker \sigma|_{U_7^\perp}$ we deduce that $K+V_1\subset \ker \sigma_l|_{U_7^\perp}$. Since $\dim(K+V_1)=5$ and $\dim(U_7^\perp)=9$, we deduce that $\sigma_l|_{U_7^\perp / l+V_1}=\sigma'_{U_7}(l)$ has rank at most four.

Finally, following the same line of ideas, one can show that $\rank(\sigma'_{U_7}(l))<4$ never happens for $l\in \bP(U_7)\setminus \cC$ because otherwise $\rank(\sigma_l)\leq 4$, which is a contradiction with the fact that $[V_1]\in X_1^\sigma$ is the unique point such that $\rank(\sigma_{[V_1]})\leq 4$.
\end{proof}

Let us denote by $\cV_7$ the rank seven vector bundle over $\bP(V_{10}/V_6)\cong \bP^3$, whose fiber over $[U_7]$ is $U_7$. This bundle fits in the two following short exact sequences:
\[
0\to V_6\otimes \cO \to \cV_7 \to \cO(-1)\to 0,
\]
\[
0\to \cV_7 \to \cV_9 \to \cN \to 0.
\] 

$\cV_7$ is a bundle on $\bP^3$ containing the bundle $V_6\otimes \cO$, and thus containing $V_1\otimes \cO$ as well. Thus (by abuse of notation) $\cV_7/V_1$ is a rank 6 vector bundle, and one can consider the associated projective bundle $\bP(\cV_7/V_1)\to \bP(V_{10}/V_6)\cong \bP^3$ parametrizing, inside a fiber, 2-dimensional subspaces containing $V_1$. Let $\cU_1$ be the relative rank one tautological bundle over the projective bundle $\bP(\cV_7/V_1)$. In order to obtain a birational model of $X_1^\sigma$ we need a further step in the construction. Indeed, the fiber of $\bP(\cV_7/V_1)$ over $[U_7]\in \bP(V_{10}/V_6)$ is $\bP(U_7/V_1)$ and a point of such a fiber is a subspace $U_2$ such that $V_1\subset U_2\subset U_7$. Similarly to before, we consider the rank two vector bundle $\cV_2$ over $\bP(\cV_7/V_1)$ whose fiber over $[U_2\subset U_7]$ is $U_2$. Thus we have a well defined tower of projective bundles $\bP(\cV_2)\to \bP(\cV_7/V_1) \to \bP(V_{10}/V_6)$ of dimension $9$; let us denote by $\cO_r(-1)$ the last relative tautological bundle. We have a natural birational projection morphism $\bP(\cV_2)\to \bP(V_{10})\cong \bP^9$. 

A point in $\bP(V_{10})$ is a one-dimensional subspace $l\subset V_{10}$. Let us suppose that $l$ is not contained inside $\bP(V_6)$. Then the preimage of $l$ via $\bP(\cV_2)\to \bP(V_{10})\cong \bP^9$ inside $\bP(\cV_2)$ is given by a unique point, i.e. the flag $[l\subset U_2\subset U_7]$ where $U_2=l+V_1$ and $U_7=l+V_6$. 
\begin{definition}
Let $X_{1.1}^\sigma$ be the closure inside $\bP(\cV_2)$ of the inverse image of $X_1^\sigma\setminus \bP(V_6)$ via $\bP(\cV_2)\to \bP(V_{10})$.
\end{definition}
Since $X_1^\sigma\cap \bP(V_6)=\cC$ is a proper subvariety of $X_1^\sigma$ and $\bP(\cV_2)\to \bP(V_{10})$ is a bijection outside of $\bP(V_6)$, we automatically deduce the following.

\begin{proposition}
\label{prop_birational_1.1}
The projection $\bP(\cV_2)\to \bP(V_{10})$ restricts to a birational morphism $X_{1.1}^\sigma\to X_1^\sigma$.
\end{proposition}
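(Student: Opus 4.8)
The plan is to check that the statement follows formally from the description of $\pi\colon\bP(\cV_2)\to\bP(V_{10})$ given just before it, so the work is really bookkeeping. First I would record that $\pi$ is an honest morphism of projective varieties: it is the composition of the two projections in the tower $\bP(\cV_2)\to\bP(\cV_7/V_1)\to\bP(V_{10}/V_6)$ with the tautological rule sending a flag $[l\subset U_2\subset U_7]$ (with $V_1\subset U_2\subset U_7$ and $V_6\subset U_7$) to $[l]\in\bP(V_{10})$; in particular $\pi$ is proper, so its restriction to the closed subvariety $X_{1.1}^\sigma$ is proper with closed image. Next I would upgrade the bijection mentioned in the text to an isomorphism of varieties: over the open set $\bP(V_{10})\setminus\bP(V_6)$ the map $\pi$ admits the explicit inverse $l\mapsto[\,l\subset l+V_1\subset l+V_6\,]$, where $l+V_1$ is $2$-dimensional (since $l\not\subset V_1$, as $V_1\subset V_6$) and $l+V_6$ is $7$-dimensional (since $l\not\subset V_6$); the two composites are the respective identities because any $[l\subset U_2\subset U_7]\in\bP(\cV_2)$ with $l\notin\bP(V_6)$ forces $U_2=l+V_1$ and $U_7=l+V_6$. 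Hence $\pi$ restricts to an isomorphism $\bP(\cV_2)\setminus\pi^{-1}(\bP(V_6))\simto\bP(V_{10})\setminus\bP(V_6)$.

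To finish, I would invoke Lemma \ref{lemma_cubic}: $\cC=X_1^\sigma\cap\bP(V_6)$ is a fourfold, hence a proper closed subvariety of the six-dimensional Peskine sixfold $X_1^\sigma$, so $X_1^\sigma\setminus\bP(V_6)=X_1^\sigma\setminus\cC$ is dense in $X_1^\sigma$. Under the isomorphism above, $\pi^{-1}(X_1^\sigma\setminus\cC)$ is carried onto $X_1^\sigma\setminus\cC$; since $X_1^\sigma$ is irreducible this preimage is an irreducible locally closed subset of $\bP(\cV_2)$ of dimension $6$, whose closure is by definition $X_{1.1}^\sigma$. Therefore $X_{1.1}^\sigma$ is irreducible of dimension $6$, the restricted morphism $\pi|_{X_{1.1}^\sigma}$ is proper with image equal to the closure of $X_1^\sigma\setminus\cC$, namely $X_1^\sigma$, and it restricts to an isomorphism over the dense open $X_1^\sigma\setminus\cC$. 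This is exactly the assertion that $X_{1.1}^\sigma\to X_1^\sigma$ is a birational morphism.

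There is essentially no serious obstacle here; the two points I would be careful about are minor. First, since $X_1^\sigma$ is singular at $[V_1]$, a bijective proper morphism onto it need not be an isomorphism, which is why I prefer to exhibit the honest inverse morphism over $\bP(V_{10})\setminus\bP(V_6)$ rather than merely count the points of the fibres — this is what turns the bijection into a genuine isomorphism over a dense open, hence into birationality. Second, one should make sure $X_1^\sigma$ is irreducible so that ``closure of a dense open'' recovers all of it; for general $\sigma\in\cD^{1,6,10}$ this holds because $X_1^\sigma$, being a Pfaffian degeneracy locus of the expected codimension, is Cohen--Macaulay, hence connected in codimension one, and it is smooth away from the single point $[V_1]$.
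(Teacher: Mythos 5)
Your proof is correct and follows essentially the same route as the paper, which simply observes that $\bP(\cV_2)\to\bP(V_{10})$ is a bijection away from $\bP(V_6)$ and that $X_1^\sigma\cap\bP(V_6)=\cC$ is a proper subvariety, then declares the birationality ``automatic.'' The extra care you take --- exhibiting the explicit inverse morphism $l\mapsto[l\subset l+V_1\subset l+V_6]$ so that the set-theoretic bijection is genuinely an isomorphism over the dense open locus, and checking irreducibility of $X_1^\sigma$ --- only makes the one-line argument of the paper rigorous.
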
 

Notice that there exists a morphism $X_{1.1}^\sigma\to \bP(V_{10}/V_6)$, which is given by restricting the projection $\bP(\cV_2)\to \bP(\cV_7/V_1)\to \bP(V_{10}/V_6)$. In the following of this section we will try to understand better $X_{1.1}^\sigma$. Then, we will show that $X_{1.1}^\sigma$ projects birationally to a subvariety $X_{1.2}^\sigma$ inside $\bP(\cV_7/V_1)$. We will use this and the fact that $X_{1.2}^\sigma$ has a quite simple description to address the question of its rationality and thus the rationality of $X_1^\sigma$. 

Let us denote by $$Y:=\{[l\subset U_2\subset U_7]\in \bP(\cV_2) \mid l\in \bP(V_6)\}\subset \bP(\cV_2).$$
First, notice that $\sigma$ defines a section 
$$\sigma'\in H^0(\bP(\cV_2)\setminus Y,\cO_r(1)\otimes \wedge^2 (\cV_9/\cV_2)^\vee) \mbox{ , }[l\subset U_2\subset U_7]\mapsto \sigma'_{U_7}(l)=\sigma_l|_{U_7^\perp/l+V_1}.$$

\begin{lemma}
\label{lem_closure_1.1}
The variety $X_{1.1}^\sigma$ can be described as the closure of
\[
(X_{1.1}^\sigma)^0:=\{ [l\subset V_2\subset V_7]\in \bP(\cV_2)\setminus Y\mid  \rank (\sigma'(l))= 4\}.
\]
It is a codimension three subvariety of $\bP(\cV_2)$.
\end{lemma}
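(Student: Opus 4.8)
The plan is to identify $(X_{1.1}^\sigma)^0$ with the inverse image of $X_1^\sigma\setminus\bP(V_6)$ under the projection $p\colon\bP(\cV_2)\to\bP(V_{10})$ from the definition of $X_{1.1}^\sigma$, and then to read off the codimension from Proposition~\ref{prop_birational_1.1}.

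First I would record the structure of $p$ away from $\bP(V_6)$. By construction $Y=p^{-1}(\bP(V_6))$, and $p$ restricts to an isomorphism from $\bP(\cV_2)\setminus Y$ onto $\bP(V_{10})\setminus\bP(V_6)$, with inverse $[l]\mapsto[l\subset l+V_1\subset l+V_6]$: if $l\not\subset V_6$ then $l\neq V_1$, so any flag $[l\subset U_2\subset U_7]$ over $[l]$ must have $U_2=l+V_1$ and $U_7=l+V_6$ by a dimension count, as already observed before Proposition~\ref{prop_birational_1.1}. Under this identification the section $\sigma'$ introduced just before the statement sends the point $[l\subset l+V_1\subset l+V_6]$ to $\sigma_l|_{U_7^\perp/(l+V_1)}$ with $U_7=l+V_6$, i.e. to the form $\sigma'_{U_7}(l)$ of Lemma~\ref{lemma_fiber_T} evaluated at $[U_7]=[l+V_6]\in\bP(V_{10}/V_6)$.

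Next, since $l\not\subset V_6$ forces $l\notin\cC$, Lemma~\ref{lemma_fiber_T} applies to every such $l$ and gives both that $\rank\sigma'(l)<4$ is impossible and that $l\in X_1^\sigma\iff\rank\sigma'(l)=4$. Hence on $\bP(\cV_2)\setminus Y$ the loci $\{\rank\sigma'(l)=4\}$ and $\{\rank\sigma'(l)\le 4\}$ coincide, and both equal $p^{-1}(X_1^\sigma\setminus\bP(V_6))$. By Lemma~\ref{lemma_cubic}, $X_1^\sigma\cap\bP(V_6)=\cC$ is a fourfold, properly contained in the sixfold $X_1^\sigma$, so $X_1^\sigma\setminus\bP(V_6)$ is dense in $X_1^\sigma$; therefore $X_{1.1}^\sigma=\overline{p^{-1}(X_1^\sigma\setminus\bP(V_6))}=\overline{(X_{1.1}^\sigma)^0}$, which is the claimed description. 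Finally, via $p$ the set $(X_{1.1}^\sigma)^0$ is isomorphic to the dense open subset $X_1^\sigma\setminus\bP(V_6)$ of the irreducible sixfold $X_1^\sigma$, hence is irreducible of dimension $6$, and so is its closure $X_{1.1}^\sigma$; since $\dim\bP(\cV_2)=9$, this gives codimension $3$. (Note $3$ is exactly the expected codimension of the rank-$\le 4$ locus of a skew-symmetric form on a rank-$7$ bundle, so $X_{1.1}^\sigma$ is a Pfaffian degeneracy locus of expected dimension.)

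The argument is essentially bookkeeping, and the only step needing care is the identification in the second paragraph: checking that every point of $\bP(\cV_2)\setminus Y$ is the tautological flag attached to its image $l$ and that $\sigma'$ restricted to it coincides with $\sigma'_{U_7}(l)$, so that the rank condition defining $(X_{1.1}^\sigma)^0$ is exactly the condition of Lemma~\ref{lemma_fiber_T} characterizing membership in $X_1^\sigma$. I do not expect any genuine obstacle beyond this.
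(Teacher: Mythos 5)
Your proposal is correct and follows the same route as the paper: the paper's own (much terser) proof simply says the first assertion is "a global version of Lemma \ref{lemma_fiber_T}" and deduces the codimension from Proposition \ref{prop_birational_1.1} (noting, as you do parenthetically, that $3$ is also the expected codimension of the rank-$\le 4$ Pfaffian locus for a $7\times 7$ skew form). Your write-up just makes explicit the identification of $\bP(\cV_2)\setminus Y$ with $\bP(V_{10})\setminus\bP(V_6)$ and the density argument that the paper leaves implicit; there is no gap.
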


\begin{proof}
This is just a global version of Lemma \ref{lemma_fiber_T}. The second assertion is a consequence of Proposition \ref{prop_birational_1.1}; however, it can also be deduced by the fact that the condition $\rank(\cdot)\leq 4$ is a codimension three condition in the space of skew-symmetric matrices of size $7\times 7$. 
\end{proof}

\begin{definition}
Let us denote by $X_{1.2}^\sigma\subset \bP(\cV_7/V_1)$ the reduced image of $X_{1.1}^\sigma$ via the projection $\bP(\cV_2)\to \bP(\cV_7/V_1)$. 
\end{definition}

As already shown, $X_{1.2}^\sigma$ projects to $\bP(V_{10}/V_6)$. We want to describe a general fiber of this morphism, and more generally we want to understand better $X_{1.2}^\sigma$; in order to do so, we will need a small digression to study $7\times 7$ skew-symmetric matrices. For the moment, let us resume the constructions we have defined in the following diagram.

\[\begin{tikzcd}
& X_1^\sigma \ar[r,hook] & \bP(V_{10})\\
X_{1.1}^\sigma\ar[r,hook]\ar[rd,"\mathrm{}"'] \ar[ru,"\mathrm{birat.}"]& \bP(\cV_2)\ar[rd,"\bP^1"'] \ar[ru,"\mathrm{birat.}"] \\
& X_{1.2}^\sigma\ar[r,hook]& \bP(\cV_7/V_1)\ar[r,"\bP^5"]
&\bP(V_{10}/V_6).
\end{tikzcd}\]

The only missing point in this diagram is the birationality between $X_{1.1}^\sigma$ and $X_{1.2}^\sigma$, for which we still have to work a little bit.

\subsubsection{Digression: on skew-symmetric matrices in seven variables}
\label{sec_digression_matrices}

Let $A_2\subset A_7$ be two vector spaces of dimension two and seven respectively. We want to study the vector space $\wedge^2 A_7$. More precisely, let us consider the twenty dimensional quotient $B:=(\wedge^2 A_7)/\wedge^2 A_2$. The space $B$ is an extension
\[
0\to A_2\otimes (A_7/A_2) \to B \to \wedge^2 (A_7/A_2)\to 0.
\]
We will denote by $b$ an element in $\wedge^2 A_7$, and by $\overline{b}$ its image in $B$. For any $[A_1]\in \bP(A_2)$, any element $b\in \wedge^2 A_7$ defines an element $(b \mod A_1)\in \wedge^2 (A_7/A_1)\cong \wedge^2 \CC^6$. Let $P\subset \GL(A_7)$ be the stabilizer of $A_2$. Let us define a $P$-invariant closed subvariety $O_2\subset B$ as
\[
O_2=\{\overline{b}\in B \mid \forall [A_1]\in \bP(A_2) \mbox{ , }\Pf(b \mod A_1) =0\},
\]
where $\Pf$ is the Pfaffian of skew-symmetric matrices (in this case of size $6\times 6$). Notice that $O_2$ is well defined since $\Pf(b \mod A_1)$ does not depend on the representative $b$, but only on the element $\overline{b}$. It is straightforward to check that $O_2$ is the intersection of two degree-three hypersurfaces in $B$ (seen as an affine space). More precisely, the projective line $\bP(A_2)\cong\bP^1$ defines a pencil of cubics given exactly by $[A_1] \mapsto \Pf(\cdot \mod A_1)\in \CC[\wedge^2(A_7/A_1)^\vee]_3\subset \CC[B^\vee]_3$, and $O_2$ is the intersection of all the cubic hypersurfaces defined by cubics in this pencil. With a little bit of help from \cite{m2} we can prove the following result.

\begin{lemma}
$O_2$ is a codimension two subvariety of $B$ given by the intersection of a pencil of cubic hypersurfaces. Its singular locus has codimension $5$ inside $B$ (and thus codimension $3$ inside $O_2$).
\end{lemma}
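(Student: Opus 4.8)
The plan is to trivialise everything by choosing a linear complement and then reading the statements off explicit cubic equations. Fix a splitting $A_7=A_2\oplus W$ with $\dim W=5$ and a basis $a_1,a_2$ of $A_2$; every $b\in\wedge^2 A_7$ is then written uniquely as $b=b_0+a_1\wedge c_1+a_2\wedge c_2+\alpha\, a_1\wedge a_2$ with $b_0\in\wedge^2 W$, $c_1,c_2\in W$, $\alpha\in\CC$, and its class $\overline b\in B$ is recorded faithfully by the triple $(b_0,c_1,c_2)\in\wedge^2 W\oplus W\oplus W\cong\CC^{20}$. First I would compute the pencil explicitly: for a line $A_1=\langle\lambda a_1+\mu a_2\rangle\subset A_2$ the image of $b$ under $\wedge^2 A_7\to\wedge^2(A_7/A_1)$ equals $b_0+v\wedge(\mu c_1-\lambda c_2)$ up to a nonzero scalar, for a suitable $v\notin W$, and since $b_0\wedge b_0\wedge b_0\in\wedge^6 W=0$ (as $\dim W=5$) and $(v\wedge d)\wedge(v\wedge d)=0$, one obtains $\Pf(b\bmod A_1)=\mathrm{const}\cdot b_0\wedge b_0\wedge(\mu c_1-\lambda c_2)$ after fixing a volume form $\wedge^5 W\cong\CC$. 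Hence the pencil of cubics on $B$ is spanned by the linearly independent cubics $Q_1(\overline b):=b_0\wedge b_0\wedge c_1$ and $Q_2(\overline b):=b_0\wedge b_0\wedge c_2$, so it is genuinely a $\bP^1$ of cubics, and $O_2=V(Q_1,Q_2)\subset B$.

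For the dimension of $O_2$ I would exhibit a resolution. Using $\wedge^4 W\cong W^\vee$, a triple $(b_0,c_1,c_2)$ lies in $O_2$ iff $c_1$ and $c_2$ lie in the linear subspace cut out by $b_0\wedge b_0$, which for $\rank b_0=4$ is the $4$-plane $L_{b_0}:=\operatorname{Im}(b_0\colon W^\vee\to W)$ and which is all of $W$ when $\rank b_0\le 2$; in both cases this is equivalent to the existence of a $4$-dimensional $L\subset W$ with $b_0\in\wedge^2 L$ and $c_1,c_2\in L$ (take $L=L_{b_0}$ when $\rank b_0=4$, the unique such $L$; enlarge $\operatorname{Im}(b_0)+\langle c_1,c_2\rangle$, of dimension $\le 4$, when $\rank b_0\le 2$). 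Thus $O_2$ is the image, under $(L,b_0,c_1,c_2)\mapsto(b_0,c_1,c_2)$, of the total space $E$ of $\wedge^2\cU\oplus\cU\oplus\cU$ over $\Gr(4,W)\cong\bP^4$, with $\cU$ the tautological rank-$4$ subbundle. Since $E$ is irreducible of dimension $4+(6+4+4)=18$ and the map $E\to O_2$ is birational --- it is bijective over the dense open locus $\{\rank b_0=4\}$, where $L$ is forced --- the variety $O_2$ is irreducible of dimension $18$, that is, of codimension $2$. Being cut out by two equations it is then a complete intersection, Cohen--Macaulay, and (admitting a dense smooth open subset) reduced, so $(Q_1,Q_2)$ is a prime ideal of codimension $2$.

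Finally, since $O_2$ is a complete intersection of codimension $2$ its singular locus is cut out by the Jacobian condition $\rank d(Q_1,Q_2)\le 1$. In a tangent direction $(\delta,\gamma_1,\gamma_2)\in\wedge^2 W\oplus W\oplus W$ one computes $dQ_i=2\,(b_0\wedge\delta)\wedge c_i+(b_0\wedge b_0)\wedge\gamma_i$. If $b_0\wedge b_0\ne 0$ then $dQ_1$ has a nonzero $\gamma_1$-component and zero $\gamma_2$-component, while $dQ_2$ has zero $\gamma_1$-component and nonzero $\gamma_2$-component, so $dQ_1,dQ_2$ are independent; hence $\Sing(O_2)\subseteq\{\,\rank b_0\le 2\,\}\times W\times W$, of dimension $7+5+5=17$. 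On that locus $dQ_i$ reduces to $\delta\mapsto 2\,(b_0\wedge\delta)\wedge c_i$, and by the perfect pairing $\wedge^3 W\times\wedge^2 W\to\wedge^5 W$ the two differentials are dependent iff $b_0\wedge(\alpha c_1+\beta c_2)=0$ in $\wedge^3 W$ for some $[\alpha:\beta]\in\bP^1$. This holds for every $(c_1,c_2)$ when $b_0=0$, contributing the $10$-dimensional piece $\{0\}\times W\times W$; when $\rank b_0=2$ with support plane $\Pi=\operatorname{Im}(b_0)$ it says the pencil $\langle c_1,c_2\rangle$ meets $\Pi$, and for fixed $\Pi$ the set of such $(c_1,c_2)$ is the generically injective image of a $\bP^1$-bundle whose fibres are rank-$7$ linear subspaces of $W\oplus W$, hence $8$-dimensional, so this contributes a piece of dimension $7+8=15$. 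Therefore $\dim\Sing(O_2)=15$, i.e.\ codimension $5$ in $B$ and codimension $3$ in $O_2$. I expect the delicate point to be exactly this last stratum count --- making the incidence-variety estimate rigorous and ruling out a higher-dimensional component --- and it is precisely here that a direct computation in \Macaulay (forming the ideal $(Q_1,Q_2)$, verifying it is prime of codimension $2$, and computing its singular locus) gives the decisive cross-check.
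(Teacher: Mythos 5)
Your argument is correct, and it reaches the same conclusions by a genuinely different route from the paper, whose proof is a black-box \Macaulay computation: write down the two cubic generators, form the Jacobian, and read off the Krull dimensions $18$ and $15$. You instead (i) split $A_7=A_2\oplus W$ and compute $\Pf(b\bmod A_1)=\mathrm{const}\cdot b_0\wedge b_0\wedge(\mu c_1-\lambda c_2)$ directly (using $\wedge^6W=0$ and $(v\wedge d)^{\wedge2}=0$), which exhibits the pencil as spanned by $Q_i=b_0\wedge b_0\wedge c_i$; (ii) resolve $O_2$ by the total space of $\wedge^2\cU\oplus\cU\oplus\cU$ over $\Gr(4,W)$, a birational parametrization which gives irreducibility and $\dim O_2=18$, hence the complete-intersection property and, together with generic smoothness, primality of $(Q_1,Q_2)$ --- a point you rightly do not skip, since it is exactly what justifies computing $\Sing(O_2)$ from the Jacobian of $Q_1,Q_2$ alone; and (iii) stratify the Jacobian-degeneracy locus by $\rank b_0$, identifying $\Sing(O_2)$ as $\{\rank b_0\le2\}$ intersected with the condition that some $\alpha c_1+\beta c_2$ lie in the support plane $\Pi$ of $b_0$, with strata of dimensions $10$ and $15$. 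The paper's approach buys speed and an independent machine check; yours buys a human-verifiable proof plus an explicit description of $\Sing(O_2)$ that could be reused to describe the singular locus $D_{\Sing(O_2)}(\sigma'')$ of $X_{1.2}^\sigma$ in Proposition \ref{prop_description_1.2_quadrics}. The one step I would phrase more robustly is the final count over a fixed rank-$2$ form $b_0$: rather than invoking a generically injective $\bP^1$-bundle, note that the condition on $(c_1,c_2)$ is that their images in $W/\Pi\cong\CC^3$ be linearly dependent, i.e.\ that $(c_1,c_2)$ lie in the preimage of the rank-$\le1$ determinantal variety in $\Hom(\CC^2,\CC^3)$, which is irreducible of dimension $4$; the preimage then has dimension exactly $4+4=8$, giving $7+8=15$ with no residual worry about extra components.
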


\begin{proof}
Starting from the explicit expression of the Pfaffian of a $6\times 6$ skew-symmetric matrix, one can recover the ideal of $O_2$ inside $\CC[B^\vee]$, which is generated by two degree-three polynomials. By using \cite{m2} one can compute the Jacobian of this ideal, then recover the ideal generated by the $2\times 2$ minors of the Jacobian, then compute its Krull dimension that turns out to be equal to $15$. It can also be checked that the Krull dimension of the ideal of $O_2$ has dimension equal to $18$.
\end{proof} 

Let us also define another $P$-invariant closed variety inside $B$. Consider the $P$-variety $\bP(A_2)\times \Gr(2,A_7/A_5)$. Denote by $\cO(-1)$ the tautological line bundle over $\bP(A_2)$ and by $\cU_2$ (respectively $\cQ_3$) the tautological rank-$2$ bundle (resp. rank-$3$ quotient bundle) on $\Gr(2,A_7/A_5)$. The bundle $(A_2/\cO(-1))\otimes \cU_2 + \cO(-1)\otimes (A_7/A_2) $ is a subbundle of $A_2\otimes (A_7/A_2)$. Moreover the bundle $\cW$ defined by the exact sequence
\[
0\to (A_2/\cO(-1))\otimes \cU_2 + \cO(-1)\otimes (A_7/A_2) \to \cW \to \wedge^2(\cU_2)\to 0 
\]
is a subbundle of the trivial bundle $B\otimes \cO_{\bP(A_2)\times \Gr(2,A_7/A_5)}$. Thus its total space projects to $B$; let us denote by $O_5$ the closure of its image.

\begin{lemma}
\label{lem_orbit_lines_throughV1}
The affine variety $O_5$ has codimension at least $5$ inside $B$. If $b$ has rank four in $\wedge^2 A_7$ and $(\overline{b} \mod A_2)$ has rank two in $ \wedge^2 (A_7/A_2)$, then $\overline{b}\in O_5$.
\end{lemma}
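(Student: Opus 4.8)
The plan is to prove the two assertions separately. The codimension bound is a dimension count: by construction $O_5$ is the closure of the image in $B$ of the total space of the vector bundle $\cW$ on $\bP(A_2)\times\Gr(2,A_7/A_2)$, and since $\rank\cW=7+1=8$, $\dim\bP(A_2)=1$ and $\dim\Gr(2,A_7/A_2)=6$, that total space has dimension $1+6+8=15$; hence $\dim O_5\le 15=\dim B-5$.

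For the second assertion, write $b=e_1\wedge e_2+e_3\wedge e_4$ with $e_1,\dots,e_4$ independent, so that $b$ is nondegenerate on its $4$-dimensional support $W:=\langle e_1,\dots,e_4\rangle\subset A_7$. Two preliminary points would be set up first. (a) Directly from the defining extension of $\cW$, the fibre $\cW_{([A_1],[U_2])}$, viewed as a subspace of $B$, is the image of $A_1\wedge A_7+(A_2+\widetilde U_2)\wedge\widetilde U_2\subset\wedge^2A_7$, where $\widetilde U_2\subset A_7$ is any lift of $U_2$; in particular $\overline b\in\cW_{([A_1],[U_2])}$ as soon as $b$ is the sum of an element of $A_1\wedge A_7$ and a $2$-vector with support $\widetilde U_2$. (b) The hypothesis $\rank(\overline b\bmod A_2)=2<4$ forces $W\cap A_2\ne 0$: otherwise $W$ would inject into $A_7/A_2$, whence $\wedge^2W\hookrightarrow\wedge^2(A_7/A_2)$, and $\overline b\bmod A_2$ would again be nondegenerate of rank $4$.

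It then remains to find, in each of the two cases $\dim(W\cap A_2)=1$ and $A_2\subset W$, a line $A_1\subset A_2$ and a $2$-plane $\widetilde U_2\subset A_7$ transverse to $A_2$ with $b\in A_1\wedge A_7+\wedge^2\widetilde U_2$. If $\dim(W\cap A_2)=1$, I would set $A_1:=W\cap A_2$, reduce modulo the kernel $A_1\wedge W$ of $\wedge^2W\to\wedge^2(W/A_1)$, and use that every element of $\wedge^2$ of the $3$-dimensional space $W/A_1$ is decomposable to write $b=\widetilde v\wedge\widetilde w+a_1\wedge x$ with $\widetilde v,\widetilde w,x\in W$ and $\langle a_1\rangle=A_1$; since $\overline b\bmod A_2\ne 0$ and $W/A_1$ embeds in $A_7/A_2$, the vectors $\widetilde v,\widetilde w$ are independent, and $\widetilde U_2:=\langle\widetilde v,\widetilde w\rangle$ satisfies $\widetilde U_2\cap A_2\subset W\cap A_2=A_1$ with $A_1\not\subset\widetilde U_2$, hence $\widetilde U_2\cap A_2=0$. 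If instead $A_2\subset W$, nondegeneracy of $b$ on $W$ forces $A_2$ not to be $b$-isotropic (an isotropic $A_2$ would make $W$ split as $A_2$ together with a complementary Lagrangian, killing $\overline b\bmod A_2$); then $W=A_2\oplus A_2^{\perp_b}$ and $b=b|_{A_2}+b|_{A_2^{\perp_b}}$ with $b|_{A_2}\in\wedge^2A_2$, so taking any line $A_1\subset A_2$ and $\widetilde U_2:=A_2^{\perp_b}$, which meets $A_2$ trivially, does the job. In both cases $\overline b\in\cW_{([A_1],[U_2])}\subset O_5$.

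The main obstacle I anticipate is twofold: pinning down the fibre $\cW_{([A_1],[U_2])}$ inside $B$ from the extension defining $\cW$, and verifying the transversality $\widetilde U_2\cap A_2=0$ that makes $([A_1],[U_2])$ an honest point of $\bP(A_2)\times\Gr(2,A_7/A_2)$. The degenerate stratum $A_2\subset W$ is the delicate one, since there the required transversal $\widetilde U_2$ is not produced by a naive reduction but must be taken to be the $b$-orthogonal of $A_2$ inside $W$, and one has to rule out the isotropic case by hand.
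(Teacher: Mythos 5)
Your proof is correct and follows essentially the same route as the paper: the same dimension count $\dim\operatorname{Tot}(\cW)=1+6+8=15$ for the first claim (you correctly read the base as $\bP(A_2)\times\Gr(2,A_7/A_2)$, fixing the paper's typo "$A_7/A_5$"), and for the second claim an explicit normal form for $b$ exhibiting $\overline b$ in a fibre of $\cW$. The only difference is organizational: the paper normalizes $\overline b\bmod A_2$ first and then constrains $x,y$ using the rank-$4$ hypothesis in a fixed basis, whereas you normalize the $4$-dimensional support $W$ of $b$ first and split into cases according to $\dim(W\cap A_2)$, using a symplectic orthogonal decomposition when $A_2\subset W$ — a coordinate-free repackaging of the same verification.
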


\begin{proof}
The total space of $\cW$ has dimension $15$ so its image can have at most dimension $15$ and since $\dim(B)=20$ we get that $\codim(O_5)\geq 5$. Consider a basis $a_1,\cdots,a_7$ of $A_7$ with $A_2=\langle a_1,a_2\rangle$. The hypothesis on $(\overline{b} \mod A_2)$ tells us that we can write $b$ as $a_1\wedge x + a_2 \wedge y + a_3\wedge a_4$. If $x\in \langle a_1,\cdots,a_4\rangle$ then $\overline{b}\in O_5$, thus we suppose that $x\notin \langle a_1,\cdots,a_4\rangle$. If $y\notin \langle a_1,\cdots,a_4,x\rangle$ then $b$ has rank six, which is excluded by our hypothesis. So $y=y'+ux$ with $y'\in \langle a_1,\cdots,a_4\rangle$ and $u\in \CC$. As a result $b=(a_1+ua_2)\wedge x + a_2\wedge y' + a_3\wedge a_4$ and we get that $\overline{b}\in O_5$.
\end{proof}

\subsection{Rationality of $X_{1,2}^\sigma$}

We start this section by showing that $\sigma$ defines a section $\sigma''$ which will completely determine $X_{1.2}^\sigma \subset \bP(\cV_7/V_1)$. Recall that over $\bP(\cV_7/V_1)$ we have a flag of bundles $\cV_2\subset \cV_7\subset \cV_9$. Let us define $E_B:=(\wedge^2 (\cV_9/\cV_2)^\vee/\wedge^2 (\cV_9/\cV_7)^\vee)\otimes \cU_1^\vee$; this bundle fits in a short exact sequence
\[
0\to (\cV_7/\cV_2)^\vee \otimes (\cV_9/\cV_7)^\vee \otimes \cU_1^\vee \to E_B \to \wedge^2 (\cV_7/\cV_2)^\vee \otimes \cU_1^\vee \to 0.
\]
Notice that the fiber $(E_B)_p$ of $E_B$ over a point $ p$ is naturally isomorphic to $B$.

\begin{lemma}
\label{lem_gg_EB}
An element $\sigma\in \cD^{1,6,10}$ defines a section $\sigma''\in H^0(\bP(\cV_7/V_1), E_B )$. Moreover the sections $\{\sigma''\}_{\sigma\in \cD^{1,6,10}}$ thus constructed globally generate $E_B$.
\end{lemma}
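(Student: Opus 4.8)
The plan is to construct $\sigma''$ fibrewise as a contraction of $\sigma$ against the tautological subbundles, and then to deduce the global generation statement from the surjectivity of an evaluation map on each fibre; in both parts the content is linear algebra, and the actual work lies in checking regularity over the whole of $\bP(\cV_7/V_1)$ and independence of the auxiliary choices. For the first assertion, over a point $p=[V_1\subset U_2\subset U_7]$ of $\bP(\cV_7/V_1)$ I would set $\Phi_p\colon U_2\to\wedge^2(U_7^\perp/U_2)^\vee$, $v\mapsto\iota_v\sigma|_{U_7^\perp/U_2}$. This is well defined: $U_2\subset U_7\subset U_7^\perp$, and by the very definition of $U_7^\perp$ (namely $V_6$ plus a lift of the $\omega$-orthogonal of $U_7/V_6$) one has $\sigma(V_1,U_7,U_7^\perp)=0$, so $\iota_v\sigma|_{U_7^\perp}$ kills $U_2$ and descends to $\wedge^2(U_7^\perp/U_2)^\vee$. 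Moreover $\Phi_p(V_1)=\sigma(V_1,\cdot,\cdot)|_{U_7^\perp/U_2}$, and the two-form $\sigma(V_1,\cdot,\cdot)|_{U_7^\perp}$ has radical exactly $U_7$ (modulo $V_6$ it is $\omega|_{(U_7/V_6)^\perp}$, which is nondegenerate with radical $U_7/V_6$); hence $\Phi_p(V_1)$ already lies in the line $\wedge^2(U_7^\perp/U_7)^\vee$. Composing $\Phi_p$ with the quotient $\wedge^2(U_7^\perp/U_2)^\vee\twoheadrightarrow\wedge^2(U_7^\perp/U_2)^\vee/\wedge^2(U_7^\perp/U_7)^\vee$ therefore makes it factor through $U_2/V_1$, i.e. yields an element $\sigma''(p)\in(E_B)_p$. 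Since the recipe is algebraic in $p$ and $\sigma(V_1,U_7,U_7^\perp)=0$ holds over the entire base — in particular over the divisor $\{U_2\subset V_6\}$ and over the image of $Y$, which is all of $\bP(\cV_7/V_1)$ — this produces $\sigma''\in H^0(\bP(\cV_7/V_1),E_B)$; on the open locus $\{U_2\cap V_6=V_1\}$ it agrees with the pushforward along $\rho\colon\bP(\cV_2)\to\bP(\cV_7/V_1)$ of the section $\sigma'$ of $\cO_r(1)\otimes\wedge^2(\cV_9/\cV_2)^\vee$ on $\bP(\cV_2)\setminus Y$, whose restriction to each $\rho$-fibre extends over the missing point $[V_1]$ because $l\mapsto\sigma_l$ is linear in $l$ (so this is the object cutting out $X_{1.2}^\sigma$).

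For global generation I would fix a general $\sigma_0\in\cD^{1,6,10}$, build $\cV_9,\bP(\cV_7/V_1),\cU_1,E_B$ from it, and vary $\sigma$ over the affine space $\sigma_0+L_0$ with $L_0:=\{\tau\in\wedge^3 V_{10}^\vee\mid\iota_{u_1}\tau=0\}$ and $u_1$ a generator of $V_1$. Each $\sigma_0+\tau$ lies in $\cD^{1,6,10}$, shares the distinguished flag and the form $\omega$ with $\sigma_0$, hence defines a section of the \emph{same} $E_B$, and $\tau\mapsto\sigma''_{\sigma_0+\tau}=\sigma''_{\sigma_0}+\sigma''_\tau$ is affine-linear; so the subsheaf of $E_B$ generated by this family contains every $\sigma''_\tau$, $\tau\in L_0$, and it suffices to show that $L_0\to(E_B)_p$, $\tau\mapsto\sigma''_\tau(p)$, is surjective for every $p=[V_1\subset U_2\subset U_7]$. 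Trivialising $\cU_1|_p$ with a vector $u\in U_2\setminus V_1$, this map sends $\tau$ to the class of $\iota_u\tau|_{U_7^\perp/U_2}$ modulo $\wedge^2(U_7^\perp/U_7)^\vee$. Now $\iota_u$ maps $L_0$ \emph{onto} the pullback copy of $\wedge^2(V_{10}/U_2)^\vee$ inside $\wedge^2(V_{10}/u)^\vee$: the relation $\iota_{u_1}\iota_u\tau=0$ forces $\iota_u\tau$ to descend to $V_{10}/U_2$, and conversely every $\eta\in\wedge^2(V_{10}/U_2)^\vee$ equals $\iota_u(\alpha\wedge\widetilde\eta)$ for $\alpha\in V_{10}^\vee$ with $\alpha(u_1)=0$, $\alpha(u)=1$ and a lift $\widetilde\eta$ of $\eta$, where $\alpha\wedge\widetilde\eta\in L_0$. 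Postcomposing with $\wedge^2(V_{10}/U_2)^\vee\twoheadrightarrow\wedge^2(U_7^\perp/U_2)^\vee\twoheadrightarrow\wedge^2(U_7^\perp/U_2)^\vee/\wedge^2(U_7^\perp/U_7)^\vee$ gives the surjectivity. Finally, a general $\sigma\in\cD^{1,6,10}$ is $\GL(V_{10})$-conjugate to a member of $\sigma_0+L_0$, by transitivity of $\GL(V_{10})$ on flags $V_1\subset V_6$ and of $\GL_4$ on nondegenerate alternating forms on $V_{10}/V_6$, so nothing is lost by this normalisation.

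The substantive step is the second assertion, and inside it the point requiring most care is that varying $\sigma$ in $\cD^{1,6,10}$ also varies $\cV_9$, and therefore $E_B$: the argument becomes meaningful only after restricting to the slice $\sigma_0+L_0$ on which $\omega$, hence $\cV_9$ and $E_B$, stay fixed, and one then has to justify that this slice meets a dense subset of $\cD^{1,6,10}$ up to change of coordinates. In the existence part the analogous, much milder, difficulty is the regularity of $\sigma''$ across the loci where $\sigma'$ itself is undefined — the divisor $\{U_2\subset V_6\}$ and, fibrewise, the point $[V_1]$ — which the contraction description dispatches uniformly through the single identity $\sigma(V_1,U_7,U_7^\perp)=0$.
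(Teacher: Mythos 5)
Your proof is correct and follows essentially the same route as the paper's: the construction of $\sigma''(p)$ by contracting $\sigma$ against $U_2$ and observing that $\sigma(V_1,\cdot,\cdot)|_{U_7^\perp}$ lies in $\wedge^2(U_7^\perp/U_7)^\vee$ (so that the class factors through $U_2/V_1$) is identical, and global generation is again established by checking surjectivity of the evaluation map at each point of $\bP(\cV_7/V_1)$. The only difference is in packaging: where the paper exhibits, for each point $p$ and each target $x\in (E_B)_p$, an explicit trivector in adapted coordinates, you instead prove surjectivity of the linear map $\tau\mapsto\sigma''_\tau(p)$ on the fixed slice $L_0=\{\tau \mid \iota_{u_1}\tau=0\}$ via the surjectivity of $\iota_u\colon L_0\to\wedge^2(V_{10}/U_2)^\vee$ -- a coordinate-free variant that also makes explicit the point, left implicit in the paper, that the linear system in question must consist of trivectors inducing the same flag and the same $\omega$, hence the same $\cV_9$ and the same bundle $E_B$.
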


\begin{proof}
In order to construct $\sigma''$ we will show that for any point $p=[U_2\subset U_7]\in \bP(\cV_7/V_1)$, $\sigma$ defines an element $\sigma''(p)\in (U_2/V_1)^\vee\otimes (\wedge^2 (U_7^\perp/U_2)^\vee / \wedge^2(U_7^\perp/U_7)^\vee)$. By the canonicity of the construction it will be clear that $\{\sigma''(p)\}_{p\in \cP(\cV_7/V_1)}$ glue together to form a section of $E_B$. We already know that $\sigma(U_2,U_2,U_7^\perp)=0$ since $\sigma(V_1,U_7,U_7^\perp)=0$. This also implies that $\sigma_{V_1}|_{U_7^\perp}\in \wedge^2 (U_7^\perp/U_7)^\vee \subset \wedge^2(U_7^\perp/U_2)^\vee$. Therefore, when we quotient $\wedge^2(U_7^\perp/U_2)^\vee$ by $\wedge^2 (U_7^\perp/U_7)^\vee$, we obtain that $(\sigma_{V_1}|_{U_7^\perp} \mod \wedge^2 (U_7^\perp/U_7)^\vee) =0$. This implies that $(\sigma(U_2,U_7^\perp,U_7^\perp) \mod \wedge^2 (U_7^\perp/U_7)^\vee)$ is actually contained in $(U_2/V_1)^\vee\otimes (\wedge^2 (U_7^\perp/U_2)^\vee / \wedge^2(U_7^\perp/U_7)^\vee)$. Thus we define $\sigma''(p)$ to be equal to $(\sigma(U_2,U_7^\perp,U_7^\perp) \mod \wedge^2 (U_7^\perp/U_7)^\vee)$.

In order to show that the set of such sections globally generates $E_B$, we will show that for any point $p=[U_2\subset U_7]\in \bP(\cV_7/V_1)$, and any $x\in (U_2/V_1)^\vee\otimes (\wedge^2 (U_7^\perp/U_2)^\vee / \wedge^2(U_7^\perp/U_7)^\vee)$, we can find a section $\sigma$ such that $\sigma''(p)=x$. Let $e_1,\cdots,e_6$ be a basis of $V_6^\vee$, $e_0,e_1$ a basis of $U_2^\vee$, $e_0,\cdots,e_8$ a basis of $(U_7^\perp)^\vee$ and $e_0,\cdots,e_9$ a basis of $V_{10}^\vee$. With this notation $x=e_0\wedge x'$, where $x'\in (\wedge^2 \langle e_2,\cdots,e_8 \rangle / e_7\wedge e_8)$. Take any lift $\tilde{x'}$ of $x'$ in $\wedge^2 \langle e_2,\cdots,e_8 \rangle$. Then define $\sigma:= e_1 \wedge (e_0\wedge e_9 + e_7\wedge e_8) + e_0\wedge \tilde{x'}$, and it is clear that $\sigma\in \cD^{1,6,10}$ and $\sigma(p)=x$.
\end{proof}  

The first result will allow us to prove the birationality between $X_{1.1}^\sigma$ and $X_{1.2}^\sigma$.

\begin{lemma}
\label{lem_ODL_O5}
The orbital degeneracy locus
\[
D_{O_5}(\sigma''):=\{ p\in \bP(\cV_7/V_1) \mid \sigma''(p) \in O_5 \subset B\cong (E_B)_p \}
\]
has dimension at most three.
\end{lemma}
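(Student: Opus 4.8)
The plan is to read $D_{O_5}(\sigma'')$ as an orbital degeneracy locus on $\bP(\cV_7/V_1)$ and to bound its dimension by the expected one, using the two facts already available: by Lemma~\ref{lem_gg_EB} the sections $\sigma''$ coming from $\cD^{1,6,10}$ globally generate $E_B$, and by Lemma~\ref{lem_orbit_lines_throughV1} the subvariety $O_5$ has codimension at least $5$ in the fibre $B\cong(E_B)_p$ (indeed $\dim B=20$ while $O_5$ has dimension at most $15$). Since $\bP(\cV_7/V_1)$ is a $\bP^5$-bundle over $\bP(V_{10}/V_6)\cong\bP^3$, it has dimension $8$, so the expected codimension of $D_{O_5}(\sigma'')$ is $5$ and the expected dimension is $8-5=3$. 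The content of the statement is that, for $\sigma$ general in $\cD^{1,6,10}$, this expected value is an honest upper bound.

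I would obtain this by the usual incidence-variety mechanism underlying dimension estimates for orbital degeneracy loci. Fix the flag $V_1\subset V_6$ and let $\sigma$ vary in the linear system $L\coloneqq\{\sigma\in\bw3V_{10}^\vee\mid\sigma(V_1,V_6,V_{10})=0\}$; granting for the moment that the construction can be set up over a base independent of $\sigma$ (see the last paragraph), consider
\[
\cI\coloneqq\{(p,\sigma)\in\bP(\cV_7/V_1)\times L\mid\sigma''(p)\in O_5\}.
\]
Over a fixed $p$, the fibre of the first projection $\cI\to\bP(\cV_7/V_1)$ is the preimage of $O_5$ under the evaluation $L\to(E_B)_p\cong B$, $\sigma\mapsto\sigma''(p)$; global generation (Lemma~\ref{lem_gg_EB}) makes this evaluation surjective, so that fibre has codimension $\codim_B O_5\ge 5$ in $L$. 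Hence $\dim\cI\le\dim\bP(\cV_7/V_1)+\dim L-5=\dim L+3$, and the fibre of the second projection $\cI\to L$ over a general $\sigma$, which is exactly $D_{O_5}(\sigma'')$, has dimension at most $\dim\cI-\dim L\le 3$ (and is empty if $\cI\to L$ is not dominant). Finally, since $\cD^{1,6,10}$ is the union of the linear spaces $L_{V_1\subset V_6}$ over the irreducible flag variety and the whole construction is $\GL(V_{10})$-equivariant, ``general in $L$'' amounts to ``general in $\cD^{1,6,10}$'', which gives the claim.

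The one point that genuinely needs care, and the only real obstacle, is the one I postponed: $\cV_9$, hence $E_B$ and the section $\sigma''$, are built from $\sigma$ through the two-form $\omega=\sigma_{V_1}|_{V_{10}/V_6}$, so one must make sense of $\cI$ as living over a fixed base carrying a fixed bundle $E_B$. This is harmless because $\cV_9$ is rigid: the extension $0\to V_6\otimes\cO\to\cV_9\to\cO(-1)^\perp\to 0$ with $\cO(-1)^\perp\cong\Omega_{\bP^3}(1)$ has class in $\Ext^1(\Omega_{\bP^3}(1),\cO^{\oplus 6})=H^1(\bP^3,T_{\bP^3}(-1))^{\oplus 6}=0$, so up to isomorphism $\cV_9$ (and therefore $E_B$) does not depend on $\omega$. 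Over the dense open of $L$ where $\omega$ is nondegenerate these bundles thus organise into a single family over $\bP(\cV_7/V_1)$, the maps $\sigma\mapsto\sigma''(p)$ are precisely the linear evaluations whose surjectivity is verified in the proof of Lemma~\ref{lem_gg_EB} (where, for each fixed $p$, the test trivectors produced all share one common $\omega$), and the dimension count above then applies verbatim; alternatively one can simply invoke the general dimension formalism for orbital degeneracy loci, which is designed to absorb exactly this bookkeeping.
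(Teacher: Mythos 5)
Your argument is correct and is essentially the paper's: the proof in the paper simply invokes the Bertini-type theorem for orbital degeneracy loci (\cite[Proposition 2.3]{benedetti}) together with Lemma \ref{lem_gg_EB} (global generation) and Lemma \ref{lem_orbit_lines_throughV1} ($\codim_B O_5\ge 5$), and your incidence-variety count $\dim\bP(\cV_7/V_1)-\codim_B O_5=8-5=3$ is precisely the standard proof of that cited theorem, unpacked. Your additional care about the dependence of $\cV_9$ (hence of $E_B$ and of the evaluation maps) on $\omega$ is a genuine subtlety that the paper glosses over when applying the citation, and your rigidity/equivariance argument disposes of it correctly.
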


\begin{proof}
$D_{O_5}(\sigma'')$ is a degeneracy locus associated to the $P$-invariant subvariety $O_5\subset B$ and the section $\sigma''$. Thus, we can apply the Bertini type theorem \cite[Proposition 2.3]{benedetti} since by Lemma \ref{lem_gg_EB} $\sigma''$ is a general element in a linear system that globally generates $E_B$. Thus the codimension of $D_{O_5}(\sigma'')$ in $\bP(\cV_7/V_1)$ is equal to the codimension of $O_5$ in $B$, and the result follows from Lemma \ref{lem_orbit_lines_throughV1}.
\end{proof}

\begin{proposition}
\label{prop_birationality_1.2}
The morphism $X_{1.1}^\sigma\to X_{1.2}^\sigma$ is birational for $\sigma$ generic in $\cD^{1,6,10}$.
\end{proposition}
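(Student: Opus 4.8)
The plan is to show that the projection $\bP(\cV_2)\to \bP(\cV_7/V_1)$ restricts to a generically injective morphism on $X_{1.1}^\sigma$, which together with the fact that it is dominant onto $X_{1.2}^\sigma$ by definition, yields birationality. First I would unwind what the fibers of $\bP(\cV_2)\to \bP(\cV_7/V_1)$ look like: over a point $p=[U_2\subset U_7]$, the fiber is $\bP(U_2)\cong\bP^1$, parametrizing the line $l\subset U_2$. So a point $p\in X_{1.2}^\sigma$ has more than one preimage in $X_{1.1}^\sigma$ precisely when at least two lines $l\in\bP(U_2)$ (equivalently, all of $\bP(U_2)$, since $(X_{1.1}^\sigma)^0$ is cut out fiberwise by a skew-symmetric rank condition that is linear in a suitable sense) satisfy $\rank(\sigma'(l))\le 4$, where $\sigma'(l)=\sigma_l|_{U_7^\perp/l+V_1}$.

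The key step is to translate the condition ``$\rank(\sigma'(l))\le 4$ for more than one $l\in\bP(U_2)$'' into the condition $\sigma''(p)\in O_2\subset B$. Recall $\sigma''(p)=(\sigma(U_2,U_7^\perp,U_7^\perp)\bmod\wedge^2(U_7^\perp/U_7)^\vee)\in B$, where here $B=(\wedge^2(U_7^\perp/U_2)^\vee)/(\ldots)$ plays the role of the $B$ from the digression with $A_2=U_2^\vee$-type data; and for $[l]=[A_1]\in\bP(U_2)$, the quantity $\Pf(\sigma''(p)\bmod A_1)$ is (up to the harmless correction by $\wedge^2(U_7^\perp/U_7)^\vee$, which lands in the kernel direction $V_1$ and does not affect the Pfaffian) the Pfaffian of the $6\times 6$ matrix $\sigma_l|_{U_7^\perp/l+V_1}$, whose vanishing is exactly $\rank(\sigma'(l))\le 4$. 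Thus the locus of $p$ for which two or more $l$ work is contained in $D_{O_2}(\sigma'')$, and I would also need the complementary observation that for $p\in X_{1.2}^\sigma\setminus D_{O_2}(\sigma'')$ there is exactly one such $l$, so the fiber of $X_{1.1}^\sigma\to X_{1.2}^\sigma$ over such $p$ is a single reduced point. Since $O_2$ has codimension $2$ in $B$ by the lemma in the digression, the Bertini-type theorem \cite[Proposition 2.3]{benedetti} (applicable because $\sigma''$ is general in a globally generating linear system by Lemma \ref{lem_gg_EB}) gives $\codim D_{O_2}(\sigma'')=2$ in $\bP(\cV_7/V_1)$, hence $\dim D_{O_2}(\sigma'')=7$; but actually I need more: I need the bad locus to be a \emph{proper closed subset of $X_{1.2}^\sigma$}, which is $6$-dimensional. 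For this I would instead intersect with $X_{1.2}^\sigma$ and use that $D_{O_5}(\sigma'')\subset D_{O_2}(\sigma'')$ (since $O_5\subset O_2$, as a rank-$4$ form with rank-$2$ reduction has every $6\times 6$ Pfaffian vanishing), so that on $X_{1.2}^\sigma$, away from the image of $D_{O_5}(\sigma'')$ (dimension $\le 3$ by Lemma \ref{lem_ODL_O5}), the generic point of $X_{1.2}^\sigma$ has a unique preimage of the correct rank-exactly-$4$ type.

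Then the argument concludes: over $p\in X_{1.2}^\sigma$ outside a subvariety of dimension $\le 3$, there is a unique $l\in\bP(U_2)$ with $\rank(\sigma'(l))=4$ (not $<4$, by the last paragraph of Lemma \ref{lemma_fiber_T}'s proof, applied globally), so the map $X_{1.1}^\sigma\to X_{1.2}^\sigma$ is injective with reduced fibers over a dense open set. Since $X_{1.1}^\sigma$ is irreducible of dimension $6$ (being birational to $X_1^\sigma$ by Proposition \ref{prop_birational_1.1}) and $X_{1.2}^\sigma$ is its reduced image, hence also irreducible of dimension $\le 6$, a dominant generically one-to-one morphism between irreducible varieties of the same dimension is birational. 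I would spell out that $\dim X_{1.2}^\sigma=6$: it cannot drop because a generic fiber of $X_{1.1}^\sigma\to X_{1.2}^\sigma$ being positive-dimensional would force, fiberwise, that all of $\bP(U_2)$ gives rank-$\le 4$ forms, i.e. the image lies in $D_{O_2}(\sigma'')$, contradicting the codimension-$2$ bound once one checks $X_{1.2}^\sigma\not\subset D_{O_2}(\sigma'')$ — which follows because a general line $l$ on $X_1^\sigma$ away from $\bP(V_6)$ has $\rank(\sigma_l)=6$ exactly, so $\sigma'(l)$ has rank exactly $4$ and $\sigma''(p)\notin O_2$.

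\medskip

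\noindent\textbf{Main obstacle.} The delicate point is the bookkeeping identifying $\Pf(\sigma''(p)\bmod A_1)$ with $\Pf(\sigma_l|_{U_7^\perp/l+V_1})$: one must carefully track the twists by $\cU_1^\vee$, the quotient by $\wedge^2(U_7^\perp/U_7)^\vee$ (equivalently by the kernel line $V_1$), and the fact that $\sigma(U_2,U_2,U_7^\perp)=0$ guarantees $\sigma''(p)$ genuinely lands in the twenty-dimensional $B$ rather than in a larger space — and then check that the rank-exactly-$4$ (as opposed to rank $\le 2$) refinement descends correctly, so that the fiber is not merely set-theoretically one point but scheme-theoretically reduced, which is what is needed for a morphism (rather than just a correspondence) to be birational.
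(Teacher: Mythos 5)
Your proposal eventually converges on the paper's own argument: the fiber of $X_{1.1}^\sigma\to X_{1.2}^\sigma$ over $p=[U_2\subset U_7]$ sits inside $\bP(U_2)$; the relevant sub-Pfaffians of the pencil $\sigma'([av_1+bv_0])=a\,e_7\wedge e_8+b\,\sigma'(l)$ are, after dividing by $b^2$, \emph{linear} in $[a:b]$ precisely because the pencil passes through the rank-two form $\sigma_{V_1}|_{U_7^\perp}=e_7\wedge e_8$ (so $(e_7\wedge e_8)^{\wedge 2}=0$ kills the higher-order terms — this is the justification your "linear in a suitable sense" is missing); hence a fiber with two points is a whole $\bP^1$, which forces $\sigma''(p)\in O_5$ via Lemma \ref{lem_orbit_lines_throughV1}, and $D_{O_5}(\sigma'')$ has dimension $\le 3$ by Lemma \ref{lem_ODL_O5}, so its preimage cannot cover the six-dimensional $X_{1.1}^\sigma$. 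This is exactly the dichotomy in the paper's proof, there phrased as $\rank(\sigma''(p))\le 2$ versus $\ge 4$.

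The detour through $O_2$, however, contains two genuine errors. First, you conflate two different $\bP^1$'s: the Pfaffians $\Pf(b\bmod A_1)$ cutting out $O_2$ are indexed by $A_1\in\bP(A_2)$ with $A_2=(U_7^\perp/U_7)^\vee$ — equivalently by the hyperplanes $W$ of $U_7^\perp$ containing $U_7$, as in Proposition \ref{prop_description_1.2_quadrics} — and not by $l\in\bP(U_2)$; the proposed identification of $\Pf(\sigma''(p)\bmod A_1)$ with "the Pfaffian of $\sigma_l|_{U_7^\perp/(l+V_1)}$" cannot hold, since the latter is a $7\times 7$ skew form and has no Pfaffian. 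Second, and more seriously, the claim in your last paragraph that a general $p\in X_{1.2}^\sigma$ has $\sigma''(p)\notin O_2$ is false: $\sigma'(l)$ has rank $4$ as a $7\times 7$ skew form, so \emph{all} of its $6\times 6$ sub-Pfaffians vanish and $\sigma''(p)\in O_2$ for every $p$ in the image of $X_{1.1}^\sigma$; this is precisely why $X_{1.2}^\sigma=D_{O_2}(\sigma'')$ in Proposition \ref{prop_description_1.2_quadrics}. In particular $X_{1.2}^\sigma\setminus D_{O_2}(\sigma'')$ is empty and the codimension-two bound on $D_{O_2}(\sigma'')$ gives you nothing. Fortunately this step is dispensable: once generic injectivity is secured through $O_5$, the equality $\dim X_{1.2}^\sigma=6$ and the birationality follow at once (in characteristic zero a dominant morphism of irreducible varieties whose generic fiber is a single reduced point is birational), so the $O_2$ discussion should simply be removed.
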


\begin{proof}
The birationality amounts to proving that for a general point $p=[l\subset U_2\subset U_7]\in X_{1.1}^\sigma$, the set $Z_p:=\{l' \subset U_2 \mid [l'\subset U_2 \subset U_7]\in X_{1.1}^\sigma\}$ consists of the single (reduced) point $p$. Let us restrict to $(X_{1.1}^\sigma)^0$, and let us suppose that $p\in (X_{1.1}^\sigma)^0$. Also, to prove birationality it will be sufficient to prove that $Z_p\cap (X_{1.1}^\sigma)^0=\{p\}$ because $(X_{1.1}^\sigma)^0$ is an open dense subset in $X_{1.1}^\sigma$. 

Therefore we can assume that $l\cap V_6=\{0\}$ and $U_2\cap V_6=V_1$. Let $0\neq v_1\in V_1$ and $0\neq v_0\in l$, and let us suppose that $l'=[av_1+bv_0]\in \bP(U_2)\setminus [V_1]$, i.e. $b\neq 0$. Let us also fix a basis $v_0,\cdots,v_8$ of $U_7^\perp$ such that $v_1,\cdots,v_6$ is a basis of $V_6$ (and $v_0\cdots,v_6$ is a basis of $U_7$). Let $e_0,\cdots,e_8$ be the dual basis of $(U_7^\perp)^\vee$. Without loss of generality, we can assume that $\sigma_{[V_1]}=e_7\wedge e_8$. Moreover $\sigma'(l)$ has rank four by Lemma \ref{lemma_fiber_T}, and $\sigma'(l')=a e_7\wedge e_8 + b\sigma'(l)\in \wedge^2 \langle e_2,\cdots,e_8 \rangle$. 

If $\rank(\sigma''([U_2\subset U_7]))\leq 2$ then by Lemma \ref{lem_orbit_lines_throughV1} we get that $[U_2\subset U_7]\in D_{O_5}(\sigma'')$. This implies that $\sigma'(l)=e_7\wedge x + y\wedge z$ and $\rank(\sigma'(l'))\leq 4$ for any $l'\in \bP(U_2)$, i.e. by Lemma \ref{lemma_fiber_T} that $\bP(U_2)$ is a line which contains $[V_1]$ and is entirely contained inside $X_1^\sigma$. However by Lemma \ref{lem_ODL_O5} the set of points $[l'\subset U_2\subset U_7]$ such that $[U_2\subset U_7]\in D_{O_5}(\sigma'')$ can at most have dimension $3+1=4$ and thus cannot cover $X_{1.1}^\sigma$. Thus, for a general point $p=[l\subset U_2\subset U_7]\in X_{1.1}^\sigma$ we can suppose that $\rank(\sigma''([U_2\subset U_7]))\geq 4$ and $\sigma'(l)=w\wedge x + y\wedge z$, with $\dim(\langle e_7,e_8,w,x,y,z \rangle)=6$. But then it is straightforward to see that $Z_p\cap (X_{1.1}^\sigma)^0=\{p\}$.
\end{proof}

The following is the diagram representing the birational models of $X_1^\sigma$:

\[\begin{tikzcd}
& X_{1.1}^\sigma \ar[ld,"\mathrm{birat.}"] \ar[r,"\mathrm{birat.}"] \ar[rd,"\mathrm{birat.}"'] & \Bl_{[V_1]}X_1^\sigma \ar[d,"\mathrm{birat.}"] \\
X_{1.2}^\sigma && X_1^\sigma.
\end{tikzcd}\]

The map $X_{1.1}^\sigma \to \Bl_{[V_1]}X_1^\sigma$ comes from the fact that $\Bl_{[V_1]}\bP(V_{10})$ can be seen as the total space of the $\bP^1$-bundle $\bP(V_1+\cO(-1))$ over $\bP(V_{10}/V_1)$; the image of the map $X_{1.1}^\sigma \to \bP(V_1+\cO(-1))$ is contained in $\Bl_{[V_1]}X_1^\sigma$ and gives us the morphism represented in the previous diagram. 

We are ready now to look more closely at the equations defining $X_{1.2}^\sigma$.

\begin{proposition}
\label{prop_description_1.2_quadrics}
For $\sigma\in \cD^{1,6,10}$ general, $X_{1.2}^\sigma$ is a six-dimensional irreducible variety isomorphic to the orbital degeneracy locus 
\[
D_{O_2}(\sigma''):=\{ p\in \bP(\cV_7/V_1) \mid \sigma''(p) \in O_2 \subset B\cong (E_B)_p \}.
\]
$X_{1.2}^\sigma$ is singular in codimension three and projects dominantly to $\bP(V_{10}/V_6)\cong \bP^3$; the fiber of this morphism over a point $[U_7]\in \bP(V_{10}/V_6)$ is the intersection of a pencil of (generically smooth) quadrics inside $\bP(U_7/V_1)\cong \bP^5$.
\end{proposition}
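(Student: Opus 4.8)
\emph{Strategy.} The plan is to identify $X_{1.2}^\sigma$ with the orbital degeneracy locus $D_{O_2}(\sigma'')$ and then read off its geometry fibrewise over $\bP(V_{10}/V_6)$. First I would record the compatibility of $\sigma'$ and $\sigma''$: for $p=[U_2\subset U_7]$ and a line $l\subset U_2$ with $l\neq V_1$ one has $l+V_1=U_2$, and the skew form $\sigma'(l)=\sigma_l|_{U_7^\perp/U_2}$ differs from $\sigma_{V_1}|_{U_7^\perp/U_2}$, and from any other $\sigma'(l')$ with $l'\subset U_2$, only by a multiple of the rank-$2$ form $\kappa:=\sigma_{V_1}|_{U_7^\perp/U_2}$, which has radical $U_7/U_2$ and spans $\wedge^2(U_7^\perp/U_7)^\vee$; thus the class of $\sigma'(l)$ in $B\cong(E_B)_p$ is independent of such $l$ and equals $\sigma''(p)$ up to the $\cU_1^\vee$-twist. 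Consequently, by Lemma~\ref{lemma_fiber_T} and Lemma~\ref{lem_closure_1.1}, if $[l\subset U_2\subset U_7]\in(X_{1.1}^\sigma)^0$ then $\sigma'(l)$ has rank $4$, so $\sigma'(l)\bmod A_1$ has rank $\le 4$ and vanishing Pfaffian for every $A_1\in\bP(A_2)$, i.e. $\sigma''(p)\in O_2$; passing to closures gives $X_{1.1}^\sigma\to X_{1.2}^\sigma\subseteq D_{O_2}(\sigma'')$.

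\emph{Equality.} For the reverse inclusion I would exploit that $\{\sigma'(l)\}_{l\in\bP(U_2)}$ is a pencil $\langle b_0,\kappa\rangle$ of skew forms on the $7$-dimensional space $U_7^\perp/U_2$. A rank-$6$ skew form in $7$ variables has its top power $\wedge^3(\cdot)\in\wedge^6(\cdot)^\vee$, viewed as a vector of the underlying space, lying in its radical; hence $\sigma''(p)\in O_2$, i.e. $\Pf(b_0\bmod A_1)=0$ for all $A_1\in\bP(A_2)$, forces the Pfaffian vector $v$ of $b_0$ into $A_2^\perp=U_7/U_2=\ker\kappa$. Then $v\in\ker b_0\cap\ker\kappa$, and modulo $\langle v\rangle$ the cubic $[s:t]\mapsto\Pf(s\bar b_0+t\bar\kappa)$ equals $s^2\,\ell(s,t)$ with $\ell$ linear and $\ell(1,0)=\Pf(\bar b_0)\neq 0$, because $\bar\kappa^{\wedge 2}=0$; its residual root, which has $s\neq 0$ for $p$ general, yields a line $l'\subset U_2$, $l'\neq V_1$, with $\rank\sigma'(l')\le 4$, hence $=4$ off $\cC$ by Lemma~\ref{lemma_fiber_T}, so $p\in X_{1.2}^\sigma$. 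Thus $X_{1.2}^\sigma$ and $D_{O_2}(\sigma'')$ agree on a dense open; since the $\sigma''$ globally generate $E_B$ (Lemma~\ref{lem_gg_EB}), \cite[Proposition 2.3]{benedetti} (applied as in Lemma~\ref{lem_ODL_O5}) gives $\codim D_{O_2}(\sigma'')=\codim_B O_2=2$ and generic reducedness; and $X_{1.2}^\sigma$ is irreducible of dimension $6$, being birational to $X_{1.1}^\sigma$ and hence to $X_1^\sigma$ (Propositions~\ref{prop_birational_1.1}, \ref{prop_birationality_1.2}). Together with $O_2$ being an irreducible complete intersection this forces $X_{1.2}^\sigma=D_{O_2}(\sigma'')$ as (reduced) schemes, irreducible of dimension $6$.

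\emph{The fibre over $[U_7]$.} Fix $[U_7]$ and choose bases adapted to $V_1\subset V_6\subset U_7\subset U_7^\perp$, so that $\sigma=e_1\wedge\omega_0+\tau$ with $\omega_0=\sigma_{V_1}|_{V_{10}/V_6}$ symplectic and $\tau$ free of $e_1$; for $u\in U_7$ with class $\bar u\in U_7/V_1$ set $\beta(u):=(\iota_u\tau)|_{U_7^\perp}\in\wedge^2(U_7^\perp/V_1)^\vee$. Then $\beta$ is linear in $\bar u$, $\beta(u)$ vanishes on $\bar u$, and $\sigma''([U_2\subset U_7])$ is, up to the $\cU_1^\vee$-twist, the class of $\beta(u)$ for $U_2=V_1+\langle u\rangle$. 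The crucial point: for $f\in A_2$ the restriction $\beta(u)|_{\ker f}$ is a rank-$\le 6$ skew form in $7$ variables \emph{with $\bar u$ in its radical}; writing its top power as $\mu_f(u)\,\bar u$ and using that $\wedge^3(\beta(u)|_{\ker f})$ is cubic in $u$ while $\bar u$ is linear, the scalar $\mu_f(u)$ is \emph{quadratic} in $u$ (and linear in $f$), and $\Pf(\sigma''(p)\bmod\langle f\rangle)=0$ iff $\mu_f(u)=0$. Hence the fibre of $X_{1.2}^\sigma=D_{O_2}(\sigma'')$ over $[U_7]$ is $\{[u]\in\bP(U_7/V_1)\cong\bP^5:\mu_f(u)=0\ \forall f\in A_2\}$, the base locus of the pencil of quadrics $\{\mu_f\}_{f\in\bP(A_2)}$. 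For $\sigma$ and $[U_7]$ general the two quadrics are independent with no common component, so the fibre is $3$-dimensional --- matching $\dim X_{1.2}^\sigma-3$, so the morphism to $\bP^3$ is dominant --- and the discriminant of the pencil is a proper divisor of $\bP^1$, so the general member is smooth. Finally, $D_{O_2}(\sigma'')$ being an ODL, \cite[Proposition 2.3]{benedetti} identifies its singular locus with the locus where $\sigma''$ meets $\Sing(O_2)$, which has codimension $5$ in $B$, hence $3$ in $O_2$, by the lemma computing $\Sing(O_2)$; so $X_{1.2}^\sigma$ is singular exactly in codimension three.

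\emph{Main obstacle.} The heart of the argument is the fibrewise computation: a priori $O_2$ is cut out by a pencil of \emph{cubics}, and one must explain why, after restriction to the linear $\bP^5=\bP(U_7/V_1)$, these become \emph{quadrics} --- the mechanism being precisely that the Pfaffian vector of a rank-$6$ skew form in $7$ variables lies in its radical, so that when the form and a radical vector both vary linearly, the governing Pfaffian invariant drops one degree. The second delicate point is the inclusion $D_{O_2}(\sigma'')\subseteq X_{1.2}^\sigma$: producing, from $\sigma''(p)\in O_2$, an explicit rank-$\le 4$ member of the pencil $\{\sigma'(l)\}_{l\in\bP(U_2)}$, which rests on $\kappa^{\wedge 2}=0$ and a root count. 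The remaining work is the bookkeeping of the nested quotients $/V_1$, $/U_2$, $/U_7$ and of the $\cU_1^\vee$-twists without sign or identification errors, and verifying that the various genericity conditions (independence of the two quadrics, $s\neq 0$ in the residual root, $U_2\not\subset V_6$) are satisfied for $\sigma$ general in $\cD^{1,6,10}$.
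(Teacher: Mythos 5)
Your proposal is correct and follows the same architecture as the paper's proof: identify $X_{1.2}^\sigma$ with the orbital degeneracy locus $D_{O_2}(\sigma'')$ by comparing dense open loci, invoke the Bertini-type theorem of \cite{benedetti} together with the global generation of $E_B$ (Lemma \ref{lem_gg_EB}) for the dimension count and the codimension-three singular locus, and exhibit the fibers over $\bP(V_{10}/V_6)$ as base loci of pencils of quadrics. Two of your local computations are packaged differently from, but equivalent to, the paper's. For the cubic-to-quadric degree drop, the paper restricts $\sigma''$ to the varying six-dimensional quotient $W/U_2$ and absorbs one degree of the Pfaffian into the twist $\det(W/U_2)^\vee\cong\det(\cQ^\vee)\otimes(\text{trivial})\cong\cO_{\bP^5}(-1)$, whereas you work on the fixed seven-dimensional space $\ker f/V_1$ and divide the cubic Pfaffian vector by the linear radical vector $\bar u$; these are the same mechanism, and yours has the merit of making the ``why quadrics, not cubics'' question explicit. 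For the inclusion $D_{O_2}(\sigma'')\subseteq X_{1.2}^\sigma$, the paper writes $\sigma_{v_0}|_{U_7^\perp}=ue_7\wedge e_8+e_7\wedge x_7+e_8\wedge x_8+\omega$ and runs a case analysis on $\rank\omega$, while you argue invariantly that the restricted Pfaffian cubic of the pencil $\langle b_0,\kappa\rangle$ factors as $s^2\,\ell(s,t)$ and extract the residual root; this is cleaner, but the two points you should nail down are (i) that $\ell$ is not proportional to $s$ (equivalently $\bar b_0^{\wedge 2}\wedge\bar\kappa\neq 0$) on a dense subset of $D_{O_2}(\sigma'')$, and (ii) in your fallback argument via equal dimensions, that $D_{O_2}(\sigma'')$ itself --- not merely $O_2$ --- is irreducible, which requires a connectedness statement beyond the codimension and singularity assertions of \cite[Proposition 2.3]{benedetti}. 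The paper sidesteps both by directly checking that the good locus $D_{O_2}(\sigma'')\setminus Y_2$ is dense, via the dimension count $\dim(D_{O_2}(\sigma'')\cap Y_2)=5<6$; adding that one computation to your argument would make it complete at the same level of rigor as the paper's.
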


\begin{proof}
Let us start by showing that the second statement holds true for $D_{O_2}(\sigma'')$, then we will prove that $X_{1.2}^\sigma\cong D_{O_2}(\sigma'')$. As already said, $D_{O_2}(\sigma'')$ is a degeneracy locus associated to the $P$-invariant subvariety $O_2\subset B$ and the section $\sigma''$. Thus, we can apply the Bertini type theorem \cite[Proposition 2.3]{benedetti} since by Lemma \ref{lem_gg_EB} $\sigma''$ is a general element in a linear system that globally generates $E_B$. Thus $D_{O_2}(\sigma'')$ is six-dimensional, its singular locus $\Sing D_{O_2}(\sigma'')$ is equal to
\[
D_{\Sing (O_2)}(\sigma''):= \{ p\in \bP(\cV_7/V_1) \mid \sigma''(p) \in \Sing (O_2) \subset B\cong (E_B)_p \}
\] 
and $\codim_{D_{O_2}(\sigma'')}(D_{\Sing (O_2)}(\sigma''))= \codim_{O_2}(\Sing(O_2))=3$. From the description of $O_2$ in Section \ref{sec_digression_matrices} it turns out that $D_{O_2}(\sigma'')$ is the intersection of a pencil of hypersurfaces. In order to see it, let us look at the fiber $D_{U_7}$ of $D_{O_2}(\sigma'')\to \bP(V_{10}/V_6)$ over a point $[U_7]\in \bP(V_{10}/V_6)$. Then $D_{U_7}\subset \bP(U_7/V_1)\cong\bP^5$ and it is defined as follows. For any seven dimensional subspace $W$ such that $U_7/V_1 \subset W \subset U_7^\perp/V_1$ one gets a section $\Pf(\sigma''|_{W})$ of the bundle $\det(W/U_2)^\vee\otimes (\cU_1^\vee)^3$. Since $W$ is a trivial extension of the dual of the quotient bundle over $\bP^5$ by a trivial factor $W/U_7\otimes \cO$, we get that $\Pf(\sigma''|_{W})$ is a section of $\det(\cQ^\vee)\otimes (\cU_1^\vee)^3=(\cU_1^\vee)^2=\cO_{\bP^5}(2)$, i.e. a quadric in $\bP^5$. 

By letting $W$ vary in $\bP(U_7^\perp/U_7)\cong \bP^1$, we obtain a pencil of quadrics whose intersection is exactly the fiber $D_{U_7}$. Since such a zero locus is never empty, $D_{O_2}(\sigma'')$ surjects onto $\bP(V_{10}/V_6)$. Moreover the general quadric in the pencil for a general point $[U_7]$ is smooth since it is the Pfaffian of a general two-form in six variables, and it is well-known that such a quadric is singular in codimension six. 

In order to show that $X_{1.2}^\sigma\cong D_{O_2}(\sigma'') $ we will show that they are the closure of the same subvariety in $\bP(\cV_7/V_1)$. More precisely, let $(X_{1.2}^\sigma)^0$ be the image of $(X_{1.1}^\sigma)^0$ under the birational morphism $X_{1.1}^\sigma\to X_{1.2}^\sigma$ (the variety $(X_{1.1}^\sigma)^0$ was defined in Lemma \ref{lem_closure_1.1}). If we denote by $$Y_2=\{[U_2\subset U_7]\in \bP(\cV_7/V_1)\mid U_2\subset V_6\}$$
then $(X_{1.2}^\sigma)^0=X_{1.2}^\sigma\setminus Y_2$. We will show that $$D_{O_2}(\sigma'')^0:=D_{O_2}(\sigma'')\setminus Y_2$$
is exactly the same as $(X_{1.2}^\sigma)^0$ and that it is a dense open of $D_{O_2}(\sigma'')$.

Firstly, $D_{O_2}(\sigma'')^0$ is dense in $D_{O_2}(\sigma'')$ because $D_{O_2}(\sigma'')\cap Y_2$ can again be described inside $Y_2$ as a degeneracy locus associated to $O_2\subset B$ and a general section; thus it has codimension two inside $Y_2\cong\bP(V_6/V_1)\times \bP(V_{10}/V_6)$, which has dimension seven. As a consequence $D_{O_2}(\sigma'')\cap Y_2$ has dimension five and $D_{O_2}(\sigma'')\setminus Y_2$ is dense in $D_{O_2}(\sigma'')$.

Now, let us show that $D_{O_2}(\sigma'')^0$ and $(X_{1.2}^\sigma)^0$ describe the same set in $\bP(\cV_7/V_1)$; since to both we attach the reduced scheme structure, the isomorphism $D_{O_2}(\sigma'') \cong X_{1.2}^\sigma$ will follow. Let $[l\subset U_2\subset U_7]\in (X_{1.1}^\sigma)^0$, which means that $\rank(\sigma'(l))=4$. This by definition of $\sigma'$ means that $\sigma_l|_{U_7^\perp}$ has rank four. In the basis $e_0,\cdots,e_9$ appearing in the proof of Lemma \ref{lem_gg_EB} we also know that $\sigma_{V_1}|_{U_7^\perp}=e_7\wedge e_8$. Thus we deduce that $\sigma_{av_1+bv_0}|_{U_7^\perp}=ae_7\wedge e_8 + b \sigma_l|_{U_7^\perp}$. When we mod out by $\CC e_7\wedge e_8 \cong \wedge^2 (U_7^\perp / U_2)^\vee$ we obtain $\sigma''([U_2\subset U_7])=(\sigma_l|_{U_7^\perp} \mod e_7\wedge e_8)$ of rank at most four; this implies that $[U_2\subset U_7]\in D_{O_2}(\sigma'')$. 

Conversely let us suppose that $[U_2\subset U_7]\in D_{O_2}(\sigma'')^0$. Let us write $\sigma_{v_0}|_{U_7^\perp}$ as $\sigma_{v_0}|_{U_7^\perp}=ue_7\wedge e_8 + e_7\wedge x_7 + e_8\wedge x_8 + \omega$, where $u\in \CC$, $x_7,x_8\in \langle e_2,\cdots,e_6 \rangle$ and $\omega\in \wedge^2 \langle e_2,\cdots,e_6 \rangle$. When we mod out by $ce_7-de_8$ we obtain $(\sigma_{v_0}|_{U_7^\perp} \mod ce_7-de_8) = e_7\wedge (x_7 + \frac{c}{d}x_8)+\omega$. The fact that $[U_2\subset U_7]\in D_{O_2}(\sigma'')^0$ implies that for any $c,d\in \CC$, $e_7\wedge (x_7 + \frac{c}{d}x_8)+\omega$ has rank at most four. For instance, if $\omega$ has already rank four, then $x_7=x_8=0$; then one can check that, by setting $l=\CC (uv_1-v_0)$, the point $[l\subset U_2\subset U_7]\in (X_{1.1}^\sigma)^0$, and as a consequence $[U_2\subset U_7]\in (X_{1.2}^\sigma)^0$. If $\omega$ has rank at most two, one can show analogously that $[U_2\subset U_7]\in (X_{1.2}^\sigma)^0$.
\end{proof}

Let now $k=\CC(\bP^3)$ be the field of rational functions of $\bP(V_{10}/V_6)\cong \bP^3$. The surjective projection $X_{1.2}^\sigma\to \bP(V_{10}/V_6)$ defines a three dimensional $k$-variety $X_{1.2 /k}^\sigma:=X_{1.2}^\sigma \times_{\bP^3}\Spec(k)$, which is a subvariety of $\bP(\cV_7/V_1)_{/k}:=\bP(\cV_7/V_1)\times_{\bP^3}\Spec(k)$. Checking the rationality of $X_{1.2}^\sigma$ then translates into checking the rationality of $X_{1.2 /k}^\sigma$. The following diagram allows to visualize the situation:

\[\begin{tikzcd}
 \bP_k^5 \ar[rd] & X_{1.2/k}\ar[r]\ar[d]\ar[l,hook'] & X_{1.2}^\sigma \ar[d]\\
& \Spec(k)\ar[r,hook] & \bP^3.
\end{tikzcd}\]

Notice that, through the base change over $k$, the bundle $\cN$ becomes a two dimensional $k$-vector space $k^2$, while $\cV_7/V_1$ becomes isomorphic to $k^6$. Then $\bP(\cV_7/V_1)_{/k}\cong \bP^5_k$ is just the projective space over $k$ and by Proposition \ref{prop_description_1.2_quadrics} $X_{1.2 /k}^\sigma$ is just the intersection of two quadrics in this projective space. Indeed, the relative version over $k$ of Lemma \ref{lem_gg_EB} tells us that $\sigma$ defines a section $$\sigma''_k \in H^0(\bP_k^5,\cO_{\bP^5_k}(1)\otimes (\wedge^2 (Q_{\bP^5_k}\oplus k^2)^\vee/\wedge^2 k^2),$$
where $Q_{\bP^5_k}$ is the rank five quotient bundle over ${\bP^5_k}$. The pencil of quadrics defining $X_{1.2 /k}^\sigma$ is given by the Pfaffians of $\sigma_k|_{\cW_6}$ (which is a two-form in six variables), for all bundles $\cW_6$ such that $Q_{\bP^5_k} \subset \cW_6 \cong Q_{\bP^5_k}\oplus k\subset Q_{\bP^5_k}\oplus k^2$. This pencil is therefore parametrized by $\bP(k^2)\cong\bP_k^1$. 

\begin{proposition}
\label{prop_X_k}
$X_{1.2 /k}^\sigma \subset \bP^5_k$ is a three-dimensional intersection of two quadrics. A general quadric in the pencil is smooth but the intersection is singular at a zero dimensional locus (i.e. at a finite number of points, this number being potentially zero).
\end{proposition}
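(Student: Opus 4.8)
The plan is to deduce Proposition \ref{prop_X_k} directly from Proposition \ref{prop_description_1.2_quadrics} by specializing all statements there to the generic point of $\bP(V_{10}/V_6)\cong\bP^3$. First I would recall that Proposition \ref{prop_description_1.2_quadrics} already establishes that $X_{1.2}^\sigma$ is six-dimensional and irreducible and that the projection $X_{1.2}^\sigma\to\bP^3$ is dominant with general fiber an intersection of a pencil of quadrics in a $\bP^5$; hence the base change $X_{1.2/k}^\sigma$ over $k=\CC(\bP^3)$ is three-dimensional (generic fiber dimension $=\dim X_{1.2}^\sigma-\dim\bP^3=6-3$) and sits inside $\bP_k^5$ as the common zero locus of two quadratic forms over $k$, namely the Pfaffians $\Pf(\sigma_k|_{\cW_6})$ described in the discussion preceding the proposition as $\cW_6$ ranges over the pencil parametrized by $\bP(k^2)\cong\bP_k^1$. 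Irreducibility of $X_{1.2}^\sigma$ together with dominance of the projection gives that $X_{1.2/k}^\sigma$ is geometrically a genuine (non-degenerate, non-reducible) intersection of two distinct quadrics.

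Next I would address smoothness of the general member of the pencil: over a general closed point $[U_7]\in\bP^3$ the general quadric in the pencil is the Pfaffian of a general skew two-form in six variables, which by the cited fact is singular only in codimension six inside the relevant $\bP^5$, i.e. it is a smooth quadric fourfold; since smoothness is an open condition on the parameter, the general member over the generic point is likewise a smooth quadric in $\bP_k^5$. For the singular locus of the intersection itself, the key input is the second assertion of Proposition \ref{prop_description_1.2_quadrics}: $X_{1.2}^\sigma$ is singular exactly in codimension three, i.e. $\dim\Sing(X_{1.2}^\sigma)=3$. Restricting to the generic fiber, $\Sing(X_{1.2/k}^\sigma)$ is the base change of $\Sing(X_{1.2}^\sigma)$ over $\Spec k$, and since $\Sing(X_{1.2}^\sigma)$ has the same dimension as $\bP^3$, either it dominates $\bP^3$ with finite generic fiber, giving a zero-dimensional (possibly empty) singular locus over $k$, or it fails to dominate $\bP^3$, in which case the generic fiber of $X_{1.2/k}^\sigma$ is smooth and the singular locus is empty. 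In both cases $\Sing(X_{1.2/k}^\sigma)$ is zero-dimensional over $k$, possibly empty.

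The only point requiring a little care is to make sure that $\Sing(X_{1.2}^\sigma)$ really meets the general fiber of $X_{1.2}^\sigma\to\bP^3$ in a finite set and not in a higher-dimensional subset contained over a proper closed subset of $\bP^3$; this is exactly what dimension-counting over the generic point handles, because any component of $\Sing(X_{1.2}^\sigma)$ that does not dominate $\bP^3$ contributes nothing to the generic fiber, while a dominant component of dimension $3$ meets the generic fiber in finitely many $k$-points. I expect this dimension bookkeeping to be the main (though still routine) obstacle; everything else is a direct translation of Proposition \ref{prop_description_1.2_quadrics} and the digression on $6\times 6$ Pfaffians into the language of the function field $k$, using that base change to $\Spec k$ preserves dimension of fibers and commutes with taking singular loci over the generic point.
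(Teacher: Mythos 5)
Your proposal is correct and follows essentially the same route as the paper: both deduce the statement by base-changing Proposition \ref{prop_description_1.2_quadrics} to the generic point of $\bP(V_{10}/V_6)$, reading off the fiber dimension $6-3=3$, the pencil-of-Pfaffian-quadrics description, the smoothness of the general member, and the zero-dimensionality of the singular locus from $\dim\Sing(X_{1.2}^\sigma)=3$ (with the same dichotomy according to whether the singular locus dominates $\bP^3$). Your extra remarks on generic smoothness and on components of the singular locus not dominating the base only spell out details the paper leaves implicit.
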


\begin{proof}
The statement about the intersection of quadrics is a consequence of the description of $X_{1.2/k}^\sigma \subset \bP^5_k$ given before. Since $X_{1.2/k}^\sigma=X_{1.2}^\sigma \times_{\bP^3}\Spec(k)$, the statements about the dimension over $k$ of $X_{1.2/k}^\sigma$ and of its singular locus are a consequence of the fact that $X_{1.2}^\sigma$ is six dimensional and its singular locus is three dimensional by Proposition \ref{prop_description_1.2_quadrics} (notice that if the singular locus does not dominate $\bP^3$ then $X_{1.2/k}^\sigma$ is smooth). The fact that a general quadric in the pencil is smooth is again a consequence of Proposition \ref{prop_description_1.2_quadrics}, where it is proved that the fiber of $X_{1.2}^\sigma \to \bP^3$ over a general point in $\bP^3$ is the intersection of a pencil of quadrics whose general element is smooth.
\end{proof}

\subsubsection{Digression: rationality of mildly singular intersection of quadrics}

The rationality of the intersection of two quadrics has been studied in \cite{hassett_tschinkel_rationality_intersection_quadrics}. In this article, the authors work with \emph{smooth} intersection of quadrics, however in the following we will show how their result is directly applicable to our situation. Let us denote by $H_k$ the hyperplane divisor inside $\bP^5_k$.

\begin{lemma}
\label{lem_rat_k}
Let $X_k$ be a $k$-variety which is the intersection of two quadrics inside $\bP_k^n$, $n\geq 4$. Suppose that the general quadric in the pencil is smooth and that the singular locus is zero dimensional, hence it consists of a finite number of points. If there exists a one dimensional geometrically integral subvariety $Z_k\subset X_k$ such that $Z_k\cdot H_k \equiv 1 \pmod{2}$ then $X_k$ is $k$-rational.
\end{lemma}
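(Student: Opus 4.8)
The plan is to reduce the statement to the rationality criterion of Hassett and Tschinkel for \emph{smooth} intersections of two quadrics \cite{hassett_tschinkel_rationality_intersection_quadrics}, and to check that the finitely many singular points of $X_k$ play no role. Recall the shape of their argument: on a smooth intersection of two quadrics $X$ of dimension at least $2$ over $k$, a geometrically integral curve of odd degree yields, by an analysis of Springer type exploiting the pencil of quadrics (pass to a general hyperplane section to get a $0$-cycle of odd degree, hence a $k$-point, hence a line), a line $\ell\subset X$ defined over $k$; the linear projection of $\bP^n_k$ away from $\ell$ then restricts to a birational map $X\dashrightarrow\bP^{n-2}_k$, because a general plane through $\ell$ meets each member of the pencil in $\ell$ together with a residual line, and the two residual lines meet in a single point. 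Since $n\geq 4$ the target is a positive-dimensional projective space, so $X$ is $k$-rational.

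The first step is to dispose of the singularities. By hypothesis $\Sing(X_k)$ is finite and the general member of the pencil of quadrics cutting out $X_k$ is smooth. Over $\overline{k}$ an intersection of two quadrics of dimension at least $3$ is covered by lines moving in a positive-dimensional family, so a general such line, and a general plane through it, avoids the finite set $\Sing(X_k)$ (for $\dim X_k=2$ one argues instead with the finitely many lines and with conics). Hence each geometric construction in the Hassett--Tschinkel argument -- producing a line over $k$ and projecting from it -- takes place inside the smooth locus of $X_k$, and uses only that the \emph{general} member of the pencil is smooth; the argument therefore goes through for $X_k$ unchanged. Equivalently, one may replace $X_k$ by a resolution $\widetilde{X}_k\to X_k$ over $k$: it is $k$-birational to $X_k$, and the modification is supported over the finite set $\Sing(X_k)$, so the birational projection is unaffected.

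It then remains to feed the curve $Z_k$ into the machine. Intersecting $Z_k$ with a general member of $|H_k|$ yields a reduced $0$-cycle on $X_k$ of degree $Z_k\cdot H_k$, which is odd, and which -- choosing $H_k$ general -- we may take to lie in the smooth locus of $X_k$. Thus $X_k$ carries a closed point of odd degree in its smooth locus, from which the Hassett--Tschinkel analysis produces a $k$-rational smooth point and then a line $\ell\subset X_k$ defined over $k$ and contained in the smooth locus. Projecting $\bP^n_k$ away from $\ell$ gives the birational map $X_k\dashrightarrow\bP^{n-2}_k$, so $X_k$ is $k$-rational.

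I expect the delicate point to be the first step: Hassett and Tschinkel prove their theorem for globally smooth $X$, whereas $X_k$ is smooth only away from a finite set whose precise local structure is not controlled here. One has to verify that these isolated singularities neither obstruct finding, over $k$, a line through the smooth locus, nor spoil the birationality of the projection from it; both should follow from $\Sing(X_k)$ being finite, but confirming that the line-finding (Springer-type) part of \cite{hassett_tschinkel_rationality_intersection_quadrics} only involves the smooth locus -- and that smoothness of the general pencil member suffices there -- is where the real work lies.
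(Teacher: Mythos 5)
Your proof is correct and follows essentially the same route as the paper: the Hassett--Tschinkel argument producing a line over $k$ from the odd-degree curve uses only the smoothness of the general member of the pencil, and the projection away from that line is birational because the singular locus is finite. The one refinement in the paper is that it invokes \cite[Proposition 2.2]{ctssd_intersection_quadrics_chatelet_surfaces} for the dichotomy ``the projection from the line is birational onto $\bP^{n-2}_k$, or the line is contained in the singular locus,'' which sidesteps your unjustified assertion that the line produced over $k$ actually lies in the smooth locus --- all that is needed is that it is not contained in the finite singular set, which is automatic.
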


\begin{proof}
Since the general quadric is smooth, the proof of \cite[Theorem A.5, (i)]{hassett_tschinkel_rationality_intersection_quadrics} applies word for word. Thus $X_k$ contains a linear subspace $\bP_k^1$. By \cite[Proposition 2.2]{ctssd_intersection_quadrics_chatelet_surfaces} $X_k$ is birational to $\bP^{n-2}_k$ or else the $\bP^1_k$ is contained in the singular locus of $X_k$. However, the singular locus of $X_k$ is a finite number of points, so the second case is not possible and $X_k$ is birational to $\bP^3_k$.
\end{proof}

Let us denote by $H$ (respectively $h$) the first Chern class of $\cU_1^\vee|_{X_{1.2}^\sigma}$ (resp. $\cO(1)|_{X_{1.2}^\sigma}$, where $\cO(1)$ is the pullback to $\bP(\cV_7/V_1)$ of the dual of the tautological bundle over $\bP(V_{10}/V_6)\cong\bP^3$).

\begin{theorem}
\label{thm_rat_X1}
Let us suppose that $\sigma\in \cD^{1,6,10}$. If there exists a $4$-dimensional subvariety $Z\subset X_{1.2}^\sigma$ whose intersection with $H\cdot h^3$ is odd and $Z_k:=Z\times_{\bP^3}\Spec(k)$ is geometrically integral, then $X_{1.2/k}^\sigma$ is $k$-rational and $X_{1.2}^\sigma$, $X_{1.1}^\sigma$ and the Peskine sixfold $X_{1}^\sigma$ are rational.
\end{theorem}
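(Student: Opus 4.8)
The plan is to reduce the statement to Lemma~\ref{lem_rat_k} applied to the $k$-variety $X_k := X_{1.2/k}^\sigma \subset \bP^5_k$, and then to propagate rationality back up the birational tower. First I would check that $X_k$ satisfies the hypotheses of Lemma~\ref{lem_rat_k} with $n = 5$: by Proposition~\ref{prop_X_k}, $X_k$ is a three-dimensional intersection of two quadrics in $\bP^5_k$ whose general member of the pencil is smooth and whose singular locus is zero-dimensional (a finite set of closed points, possibly empty). So the only thing to produce is a geometrically integral curve $Z_k \subset X_k$ with $Z_k \cdot H_k$ odd, where $H_k$ is the hyperplane class of $\bP^5_k$.

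The key point is that this curve is exactly what the hypothesis on $Z$ provides. Under the base change $-\times_{\bP^3}\Spec(k)$, the divisor class $h$ on $X_{1.2}^\sigma$ (pulled back from $\bP^3$) restricts trivially, so a $4$-dimensional cycle $Z$ on $X_{1.2}^\sigma$ specializes to the generic fiber as the $1$-dimensional cycle $Z_k = Z\times_{\bP^3}\Spec(k)$, and the intersection number $Z_k \cdot H_k$ on the three-fold $X_k$ equals the intersection number $(Z\cdot H\cdot h^3)$ computed on $X_{1.2}^\sigma$ — indeed, intersecting $Z$ with $h^3$ cuts out a fiber (a general point of $\bP^3$), and then intersecting with $H$ on that fiber records the same integer as $Z_k\cdot H_k$. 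By hypothesis this number is odd, and by hypothesis $Z_k$ is geometrically integral. Hence Lemma~\ref{lem_rat_k} applies and $X_{1.2/k}^\sigma$ is $k$-rational.

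Now I would transfer this back to the varieties over $\CC$. The $k$-rationality of $X_{1.2/k}^\sigma \cong X_{1.2}^\sigma\times_{\bP^3}\Spec(k)$, i.e.\ a $k$-birational map $X_{1.2/k}^\sigma \dashrightarrow \bP^3_k$, spreads out over a dense open of $\bP(V_{10}/V_6)\cong\bP^3$ to a birational map $X_{1.2}^\sigma \dashrightarrow \bP^3\times\bP^3$ over $\bP^3$; since $\CC(\bP^3\times\bP^3) = \CC(x_1,\dots,x_6)$, this shows $X_{1.2}^\sigma$ is rational (of dimension six, as it should be). Then Proposition~\ref{prop_birationality_1.2} gives that $X_{1.1}^\sigma\to X_{1.2}^\sigma$ is birational, and Proposition~\ref{prop_birational_1.1} gives that $X_{1.1}^\sigma\to X_1^\sigma$ is birational, so the Peskine sixfold $X_1^\sigma$ is rational as well.

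The routine but necessary verifications are: (i) that the hypotheses of Lemma~\ref{lem_rat_k} are genuinely met — which is immediate from Proposition~\ref{prop_X_k} — and (ii) the identification of the intersection number $Z_k\cdot H_k$ with $Z\cdot H\cdot h^3$, which is a compatibility-of-intersection-theory-under-base-change statement. The one place to be slightly careful is whether the finiteness of $\Sing X_k$ really follows: $X_{1.2}^\sigma$ has a three-dimensional singular locus by Proposition~\ref{prop_description_1.2_quadrics}, and one must note (as Proposition~\ref{prop_X_k} already does) that if this singular locus dominates $\bP^3$ then its generic fiber is still zero-dimensional, while if it does not dominate $\bP^3$ then $X_k$ is simply smooth; either way $\Sing X_k$ is finite over $k$, so the alternative ``$\bP^1_k\subset\Sing X_k$'' in the proof of Lemma~\ref{lem_rat_k} is excluded. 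I do not expect any serious obstacle here; the content of the theorem is really just assembling the pieces established in the preceding sections.
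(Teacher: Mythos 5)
Your proposal is correct and follows essentially the same route as the paper: base-change $Z$ to the generic fiber to obtain a geometrically integral curve $Z_k\subset X_{1.2/k}^\sigma$ with odd degree against $H_k$, apply Lemma~\ref{lem_rat_k} together with Proposition~\ref{prop_X_k} to get $k$-rationality, and then descend through Propositions~\ref{prop_birationality_1.2} and~\ref{prop_birational_1.1}. Your extra remarks on the compatibility of intersection numbers under base change and on the finiteness of $\Sing X_k$ over $k$ are exactly the verifications the paper leaves implicit.
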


\begin{proof}
Notice that the intersection is well defined since we can suppose that $H\cdot h^3$ avoids the singularities of $X_{1.2}^\sigma$. Moreover $Z$ dominates $\bP(V_{10}/V_6)$ since the intersection with $H\cdot h^3$ is non zero. Then combine Lemma \ref{lem_rat_k} and Lemma \ref{prop_X_k} with the fact that $Z$ defines a subvariety $Z_k$ inside $X_{1.2 /k}^\sigma$ which fulfills the hypothesis of Lemma \ref{lem_rat_k}. This implies that $X_{1.2/k}^\sigma$ is rational over $k=\CC(\bP^3)$ and thus that $X_{1.2}^\sigma$ is rational over $\CC$. By Proposition \ref{prop_birational_1.1} and Proposition \ref{prop_birationality_1.2} we deduce the statement of the theorem.
\end{proof}

\begin{remark}
The hypothesis on $Z_k$ being geometrically integral is for instance ensured as soon as $Z$ and all the fibers of $Z\to \bP^3$ are integral (by \cite[Lemma 3.11]{birat_levico}). 
\end{remark}

\section{Open questions}
\label{sec_open_questions}

As in the case of cubic fourfolds containing a plane, an interesting question is whether the results in Theorem \ref{thm_rat_X1} and Proposition \ref{prop_trivial_twisting} are somehow related. In the case of cubics containing a plane, the existence of a codimension two class with the right intersection number implies both that the cubic is rational and that the twisting of the associated twisted K3 surface is trivial. Does a similar result hold for the Peskine sixfold? More generally, we state the following conjecture, which reproduces what happens conjecturally for cubic fourfolds containing a plane:

\begin{Conjecture}
Let $\sigma\in\cD^{1,6,10}$ be a general element.  If the twisting class $\beta$ of the associated degree $6$ twisted K3 surface $(S,\beta)$ is trivial then the Peskine sixfold $X_1^\sigma$ is rational.
\end{Conjecture}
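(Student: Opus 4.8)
The plan is to reduce the conjecture to Theorem~\ref{thm_rat_X1}: assuming $\beta$ is trivial, I would construct a $4$-dimensional $Z\subset X_{1.2}^\sigma$ meeting $H\cdot h^3$ in an odd number of points and with $Z_k$ geometrically integral, and then conclude. The first step is the converse of Proposition~\ref{prop_trivial_twisting}. Since $\pi\colon D\to S$ is a smooth conic bundle, the Brauer class $\beta\in\Br(S)[2]$ vanishes if and only if the generic conic $D_{k_S}$ has a $k_S$-rational point, equivalently if and only if a relative $\cO(1)$ exists over a Zariski-dense open of $S$; the closure in $D$ of such a point is a surface $T$ meeting the general fibre $\ell$ of $\pi$ exactly once, so $T\cdot_D\ell$ is odd. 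Thus the hypothesis is equivalent to the existence of an algebraic surface $T\subset D\subset X_6^\sigma$ with odd fibre-degree over $S$.

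The second step is to transport this information to the Peskine side. The key tool is the isomorphism of integral Hodge structures $H^6(X_1^\sigma,\ZZ)_{\van}\cong H^2(X_6^\sigma,\ZZ)_{\prim}$ (up to a Tate twist) of~\cite{BenSong}, which I would first upgrade to an honest algebraic correspondence $\Gamma\subset X_6^\sigma\times X_1^\sigma$ -- most naturally built from the orbital degeneracy loci attached to $\sigma$ on intermediate flag Grassmannians, for instance through $X_7^\sigma\subset\Gr(7,V_{10})$, the Fano variety of lines of $X_1^\sigma$. Applying $\Gamma$ to $D$, to the conic class $[\ell]$, and to $T$, and exploiting the geometric comparison between the conic bundle $D\to S$ and the pencil-of-quadrics fibration $X_{1.2}^\sigma\to\bP(V_{10}/V_6)$ of Proposition~\ref{prop_description_1.2_quadrics}, should produce Hodge classes on $X_1^\sigma$ whose mutual intersection numbers retain the parity of $T\cdot_D\ell$, since the two polarizations are matched by the \cite{BenSong} lattice isometry. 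Using the integral Hodge conjecture for the Fano sixfold $X_1^\sigma$ in the relevant degree, these classes are represented by effective algebraic cycles; passing to the birational model $X_{1.2}^\sigma$, taking the union of the components dominating $\bP(V_{10}/V_6)$, then an irreducible component of odd $H\cdot h^3$-degree (normalised if necessary), yields the desired $Z$, with geometric integrality of $Z_k$ secured by a Bertini/generic-smoothness argument as in the remark following Theorem~\ref{thm_rat_X1}. One then concludes by Theorem~\ref{thm_rat_X1} and Propositions~\ref{prop_birational_1.1} and~\ref{prop_birationality_1.2}. An alternative, more hands-on route for this step would be to identify the rationality obstruction of the $k$-threefold $X_{1.2/k}^\sigma$ (an intersection of two quadrics in $\bP^5_k$) with the Brauer class of the even Clifford algebra of its defining pencil, living on the genus-$2$ curve doubly covering the pencil $\bP^1_k$, and to show directly that this class is governed by $\beta$.

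\textbf{The main obstacle} is precisely this dictionary: proving that the Brauer class $\beta$ on the Debarre--Voisin side really controls the rationality obstruction of the quadric-pencil fibration on the Peskine side, and -- crucially -- that a cycle trivialising $\beta$ yields an \emph{algebraic} cycle (not merely a Hodge class) on $X_{1.2}^\sigma$ of the precise dimension and parity required by Theorem~\ref{thm_rat_X1}. This demands both a cycle-level (not merely Hodge-level) refinement of the \cite{BenSong} correspondence, with controlled action on the N\'eron--Severi lattices, and the integral Hodge conjecture for $X_1^\sigma$ in middle degree (or a direct geometric construction of $Z$ sidestepping it). The analogy with cubic fourfolds containing a plane, where Hassett's single surface simultaneously trivialises $\beta$ and witnesses rationality (cf.~\cite{hassett}), is the guiding principle, but making it rigorous in the present setting is exactly what keeps this statement a conjecture.
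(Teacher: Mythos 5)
This statement is stated in the paper as a \emph{conjecture}, and the paper offers no proof of it; there is therefore nothing to compare your argument against. What you have written is a strategy outline rather than a proof, and you are right to flag its gaps yourself: each of the steps you describe as ``the main obstacle'' is a genuine open problem, not a technical detail. Concretely, (i) the comparison of \cite{BenSong} between $H^6(X_1^\sigma,\ZZ)_{\van}$ and $H^2(X_6^\sigma,\ZZ)_{\prim}$ is a lattice isometry of Hodge structures, and no algebraic correspondence inducing it is currently known, so ``applying $\Gamma$ to $T$'' has no meaning yet; (ii) the integral Hodge conjecture in middle degree for the (singular!) sixfold $X_1^\sigma$ is not available, so even a Hodge class of the right parity would not automatically give the subvariety $Z$ required by Theorem~\ref{thm_rat_X1}; and (iii) there is no established dictionary between the Brauer class $\beta$ of the conic bundle $D\to S$ on the Debarre--Voisin side and the Brauer-type obstruction (e.g.\ the even Clifford algebra of the pencil of quadrics) governing the $k$-rationality of $X_{1.2/k}^\sigma$. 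Your first step --- that triviality of $\beta$ is \emph{equivalent} to the existence of a surface $T\subset D$ of odd fibre degree, i.e.\ the converse of Proposition~\ref{prop_trivial_twisting} --- is essentially correct and is the easy part; everything after it is precisely what the paper identifies, in Section~\ref{sec_open_questions}, as the content of the conjecture.

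Your proposal is nevertheless a reasonable road map and is close in spirit to what the paper itself suggests: the authors propose as a first step to show that the existence of a $2$-cycle in $D$ of odd fibre degree implies the hypotheses of Theorem~\ref{thm_rat_X1}, which is your step two in slightly different clothing. But as it stands this cannot be accepted as a proof; it is a restatement of the conjecture together with a plausible plan of attack whose every nontrivial step remains open.
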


The first step would be to prove that, if there exists a $2$-dimensional subvariety inside $D$ whose intersection with a general fiber of $D\to S$ is odd, so that by Proposition \ref{prop_trivial_twisting} the class $\beta$ is trivial, then the hypothesis of Theorem \ref{thm_rat_X1} are fulfilled. One could speculate even more, asking whether the rationality of a Peskine $X_1^\sigma$ for $\sigma\in \cM$ is equivalent to the existence of an \emph{associated $K3$ surface} (associated $K3$ surfaces may be defined analogously to the case of cubics). This would reproduce the conjectural behaviour of cubic fourfolds in the case of Peskine-Debarre-Voisin varieties and it seems to us that such a question deserves to be investigated further. Notice that Theorem \ref{thm_rationality_3310} would also agree with such a conjecture.

Notice that at this point it is not even clear whether the hypothesis in Theorem \ref{thm_rat_X1} defines countably many divisorial conditions in $\cD^{1,6,10}$. The difficulty in dealing with the Peskine variety $X_1^\sigma$ for $\sigma\in\cD^{1,6,10}$ is the fact that the Peskine (and $X_{1.2}^\sigma$ as well) is not smooth. Explicitly constructing a desingularization $\tilde{X_1^\sigma}\to X_1^\sigma$ should be necessary in order to understand whether the existence of a four dimensional subvariety $Z\subset X_{1.2}^\sigma$ translates into a divisorial condition in the moduli space of $\tilde{X_1^\sigma}$. 

Finally let us mention that some of the constructions in the present paper may be used to understand better the derived category of the Peskine sixfold. In particular, it would be interesting to produce a Kuznetsov component of $K3$-type (inside $D^b(X_1^\sigma)$) which becomes isomorphic to the derived category of a $K3$ surface when an \emph{associated $K3$ surface} exists. We suggest that, when $\sigma\in \cD^{1,6,10}$, the embedding of the cubic fourfold $\cC$ inside $X_1^\sigma$ may be used in order to elucidate better the situation since it is known that the derived category of the (resolution of the) cubic fourfold contains a component of K3-type (by \cite{kuz}).



\end{document}